\renewcommand{\eprint}[1]{\href{https://arxiv.org/abs/#1}{#1}}
\DeclareMathOperator{\Lie}{Lie}
\newcommand{\fg}{\frak{g}}
\newtheorem{Thm}{Theorem}[section]
\newtheorem{Prop}[Thm]{Proposition}
\theoremstyle{definition}
\newtheorem{Def}[Thm]{Definition}
\theoremstyle{remark}
\newtheoremstyle{named}{}{}{\itshape}{}{\bfseries}{.}{.5em}{#1 #3}
\theoremstyle{named}
\def\C{\mathbb{C}}
\def\P{\mathbb{P}}
\def\fb{\mathfrak{b}}
\def\g{\mathfrak{g}}
\def\Frenkel:2013uda{\mathfrak{h}}
\def\cF{\mathcal{F}}
\def\cL{\mathcal{L}}
\def\cV{\mathcal{V}}
\def\bo{\textbf{o}}
\def\=>{\Longrightarrow}
\def\to{\longrightarrow}
\def\o+{\oplus}
\def\bo+{\bigoplus}
\def\<{\langle}
\def\>{\rangle}
\def\({\left(}
\def\){\right)}
\def\^{\wedge}
\def\+{\dagger}
\def\dd[#1,#2]{\frac{d#1}{d#2}}
\def\del[#1,#2]{\frac{\partial #1}{\partial #2}}
\def\over[#1]{\overline{#1}}
\def\vec[#1]{\overrightarrow{#1}}
\def\mr@ignsp#1 {\ifx\:#1\@empty\else #1\expandafter\mr@ignsp\fi}%
\newcommand{\multiref}[1]{\begingroup
\xdef\mr@no@sparg{\expandafter\mr@ignsp#1 \: }%
\def\mr@comma{}%
\@for\mr@refs:=\mr@no@sparg\do{\mr@comma\def\mr@comma{,}\ref{\mr@refs}}%
\endgroup}
\newcommand{\hypref}[2]{\ifx\href\asklFrenkel:2013udaas #2\else\href{#1}{#2}\fi}
\tikzset{->-/.style={decoration={
  markings,
  mark=at position .5 with {\arrow{latex}}},postaction={decorate}}}
\tikzset{
    >=latex
    }
\newcommand{\nc}{\newcommand}
\nc{\on}{\operatorname}
\nc{\la}{\lambda}
\nc{\wh}{\widehat}
\nc{\ghat}{\wh\g}
\nc{\mb}{\mathbf}
\begin{document}
\title[On Wronskians and $qq$-systems]{On Wronskians and $qq$-systems}

\author[A.M. Zeitlin]{Anton M. Zeitlin}
\address{
          Department of Mathematics, 
          Louisiana State University, 
          Baton Rouge, LA 70803, USA\newline
Email: \href{mailto:zeitlin@lsu.edu}{zeitlin@lsu.edu},\newline
 \href{http://math.lsu.edu/~zeitlin}{http://math.lsu.edu/$\sim$zeitlin}
}

\date{\today}

\numberwithin{equation}{section}

\begin{abstract}
We discuss the $qq$-systems, the functional form of the Bethe ansatz equations for the twisted Gaudin model from a new geometric point of view. We use a concept of $G$-Wronskians, which are certain meromorphic sections of principal $G$-bundles on the projective line. In this context, the $qq$-system, similar to its difference analog, is realized as the relation between generalized minors of the $G$-Wronskian. We explain the link between $G$-Wronskians and twisted $G$-oper connections, which are the traditional source for the $qq$-systems.
\end{abstract}

\maketitle

\setcounter{tocdepth}{1}
\tableofcontents

\section{Introduction}
The impact of quantum integrable models on modern mathematics is enormous.  
The important examples of this kind are the so-called spin chain models \cite{Baxter:1982zz, Korepin_1993, Reshetikhin:2010si}. While many of the algebraic structures observed there found themselves in pure mathematics, in particular in the modern theory of quantum groups, the original method of solution of these models, known as algebraic Bethe ansatz \cite{Takhtajan:1979iv, Reshetikhin:2010si} remained popular mainly in the framework of mathematical physics. The centerpiece of this method, which on the folklore level is a method of diagonalization of a mutually commuting set of operators (transfer-matrices) in finite-dimensional vector space \cite{Korepin_1993, Reshetikhin:2010si} are the resulting algebraic equations, known as {\it Bethe equations} \cite{Bethe:1931hc, Korepin_1993} which at first sight have no particular mathematical meaning.
In recent years a lot of activity has been devoted to finding the geometric context in which these equations appear naturally. In a particular case of XXX/XXZ spin chains \cite{Takhtajan:1979iv, OGIEVETSKY1986360}, the integrable models, based on Yangians ($Y(\mathfrak{g})$)/quantum affine algebras ($U_{\hbar}(\mathfrak{g})$), the Bethe equations emerge as the relations for the quantum equivariant cohomology/K-theory of a certain variety \cite{Pushkar:2016qvw, Koroteev:2017aa, Aganagic:2017be, Okounkov:2015aa} as conjectured in the theoretical physics context \cite{Nekrasov:2009ui, Nekrasov:2009uh}. At the same time, these relations take a straightforward way, written in terms of a system of difference equations, known as $QQ$-systems. Incidentally for $U_{\hbar}(\mathfrak{g})$ the $QQ$-systems emerge as the relations in the extended Grothendieck ring of finite dimensional representations of quantum affine algebras \cite{Bazhanov:1998dq, HJ, Frenkel:2013uda, Frenkel:2016}.

Much earlier, another geometric realization due to B. Feigin, 
E. Frenkel and their collaborators \cite{Feigin:1994in, Frenkel:2003qx, Frenkel:2004qy,Feigin:2006xs ,Rybnikov:2010} was achieved for the semiclassical version of the aforementioned integrable models, the so-called Gaudin model. It turned out the Bethe equations, in this case, describe certain principal $^LG$-bundle connections (group $^LG$ corresponds to the Langlands dual $^L\mathfrak{g}$) on the projective line, called opers, with a prescribed singularity structure. This geometrization of Gaudin Bethe equations was a part of a far bigger story: this correspondence is an example of a geometric Langlands correspondence.

A naturally arising question is whether there exists a deformation of this example if a similar correspondence holds for Bethe equations of XXX/XXZ models, and what is a proper generalization of the principal bundle connection. In \cite{KSZ, Frenkel:2020} we introduced the deformed version of the connection, which we called $(^LG,\hbar)$-opers for Bethe equations of XXZ type \footnote{In the non-simply laced case of $\mathfrak{g}$, the situation is more involved, see recent paper \cite{FHRnew}.}. While \cite{Frenkel:2020} we treated $(^LG,\hbar)$-opers on Lie-theoretic level, in \cite{KSZ ,Koroteev:2020tv} for $(SL(N),\hbar)$-opers we exploited a different approach, which used interpretations of $QQ$-systems as minors in the deformed and twisted version of the Wronskian matrix. At the time, it seemed like a construction specific for defining the representation of $SL(N)$. Still, later, it was observed in \cite{KZminors} that a $QQ$-system emerges from a new object, $(^LG,\hbar)$-Wronskian: a meromorphic section of a principal $^L G$-bundle satisfying a certain difference equation. We established the explicit correspondence between $(^LG,\hbar)$-opers and $(^LG,\hbar)$-Wronskians in \cite{KZminors}, so that the $QQ$-system emerges as relations between generalized minors \cite{FZ ,Fomin:wy} of $(G,\hbar)$-Wronskian.

In \cite{Brinson:2021ww} we described the classical limit of the $QQ$-system: we called it $qq$-system, which is a system of differential equations representing the original Gaudin model/oper context. In this note we explain how to obtain the differential $G$-Wronskian and generalized minor interpretation of the $qq$-system. 

The exposition is as follows. First, Section 2 reviews the concept of generalized minors following Fomin and Zelevinsky. Then, in Section 3, we introduce the idea of $G$-Wronskian for simply connected simple group $G$ and its relation to the $qq$-system. Next, in Section 4, we discuss the class of $G$-opers, which correspond to the $qq$-system, following \cite{Brinson:2021ww}. Finally, in Section 5, we establish the relation between two objects: $G$-opers and $G$-Wronskians and discuss the differences between $G$-Wronskians and their deformed analogs.

\subsection*{Acknowledgements} The author is indebted to E. Frenkel, P. Koroteev, and D. Sage for fruitful discussions. The author is partially supported by Simons Collaboration Grant 578501 and NSF grant DMS-2203823.

\section{Generalized minors}    
\subsection{Group-theoretic data}
Let $G$ be a connected, simply connected, simple algebraic group of
rank $r$ over $\mathbb{C}$.  We fix a Borel subgroup $B_-$ with
unipotent radical $N_-=[B_-,B_-]$ and a maximal torus $H\subset B_-$.
Let $B_+$ be the opposite Borel subgroup containing $H$.  Let $\{
\alpha_1,\dots,\alpha_r \}$ be the set of positive simple roots for
the pair $H\subset B_+$.  Let $\{ \check\alpha_1,\dots,\check\alpha_r
\}$ be the corresponding coroots; the elements of the Cartan matrix of
the Lie algebra $\fg$ of $G$ are given by $a_{ij}=\langle
\alpha_j,\check{\alpha}_i\rangle$. 

The Lie algebra $\fg$ has Chevalley
generators $\{e_i, f_i, \check{\alpha}_i\}_{i=1, \dots, r}$, so that
$\fb_-=\Lie(B_-)$ is generated by the $f_i$'s and the
$\check{\alpha}_i$'s and $\fb_+=\Lie(B_+)$ is generated by the $e_i$'s
and the $\check{\alpha}_i$'s, while $\mathfrak{h}=\mathfrak{b}_{\pm}/[\mathfrak{b}_{\pm}, \mathfrak{b}_{\pm}]$ is generarted by $\check{\alpha}_i$'s.  
Let $\{\omega_i\}_{i=1,\dots,r}$ and $\{\check{\omega}_i\}_{i=1,\dots,r}$ be the
fundamental weights and coweights correspondingly, defined by $\langle \omega_i,
\check{\alpha}_j\rangle=\langle \check{\omega_i},
{\alpha}_j\rangle\delta_{ij}$. The element ${\rm ad}_{\check{\rho}}$, where $\check{\rho}=\sum^r_{i=1}{\check{\omega}_i}$  defines a principal gradation on $\mathfrak{b}_+=\oplus_{i\ge 0}\mathfrak{b}_{+,i}$.

Let $W_G=N(H)/H$ be the Weyl group of $G$. Let $w_i\in W$, $(i=1,
\dots, r)$ denote the simple reflection corresponding to
$\alpha_i$. We also denote by $w_0$ be the longest element of $W$, so
that $B_+=w_0(B_-)$.  We also fix
representatives $s_i\in N(H)$ of $w_i$, and in general will denote $\tilde{w}$ the representative of 
$w$ in $N(H)$.



\subsection{Generalized Minors and their properties}
 Consider the big cell $G_0\in G$ in Bruhat decomposition: $G_0=N_-HN_+$.
Any element $g\in G_0$ can be represented as follows:
\begin{eqnarray}
g=n_-~h~n_+.
\end{eqnarray}
Let $V_i$ be the irreducible representation of $G$ with highest weight $\omega_i$ and highest weight vector $\nu_{i}$ which is the eigenvector for any element $h\in H$, i.e. $h\nu_{i}=[h]^{\omega_i}\nu_{i}$, $[h]^{\omega_i}\in \mathbb{C}^{\times}$. 
Then we can formulate the following definition.
\begin{Def}\cite{FZ}
Regular functions $\{\Delta^{\omega_i}\}_{i=1, \dots, r}$ on $G$, whose values on a dense set $G_0$ are given by
\begin{eqnarray}
\Delta^{\omega_i}(g)=[h]^{\omega_i}, \quad i=1, \dots, r
\end{eqnarray}
will be referred to as  {\it principal minors} of a group element $g$. 
\end{Def}

In case of $G=SL(r+1)$, these functions coincide with the principal minors of the standard matrix realization of $SL(r+1)$. We define other generalized minors using the action of  the lifts of the Weyl group elements on the right and the left and then applying principal minors to the result. In other words, we have the following definition.
\begin{Def}\cite{FZ}
For $u, v\in W_G$, we define a regular function $\Delta_{u\omega_i, v\omega_i}$ on $G$ by setting
\begin{equation}
\Delta_{u\cdot \omega_i, v\cdot \omega_i}(g)=\Delta^{\omega_i}(\tilde{u}^{-1}g~\tilde{v}).
\end{equation}
\end{Def}
 Notice that in this notation $\Delta_{\omega_i, \omega_i}(g)=\Delta^{\omega_i}(g)$. Consider the orbit $\mathcal{O}_{W_G}=W_G\cdot \mathbb{C}\nu^+_{\omega_i}$. Then we have the following Proposition.

\begin{Prop}
Action of the group element on the highest weight vector $\nu_i\in V_i$ is as follows:
\begin{eqnarray}
g\cdot \nu_{i}=\sum_{w\in W}\Delta_{w\cdot \omega_i, \omega_i}(g)\tilde{w}\cdot\nu_{i}+\dots,
\end{eqnarray}
where dots stand for the vectors, which do not belong to the orbit $\mathcal{O}_W$.
\end{Prop}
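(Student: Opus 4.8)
The plan is to realize each generalized minor $\Delta_{w\cdot\omega_i,\omega_i}(g)$ as a matrix coefficient of the $G$-action on $V_i$, and then to read off the expansion of $g\cdot\nu_i$ in a weight basis. First I would show that for $g$ in the big cell $G_0$, writing $g=n_-\,h\,n_+$, the highest weight vector satisfies $n_+\cdot\nu_i=\nu_i$ (since each positive root vector annihilates $\nu_i$), $h\cdot\nu_i=[h]^{\omega_i}\nu_i$, and $n_-\cdot\nu_i=\nu_i+(\text{strictly lower weight terms})$ because $N_-$ is unipotent and strictly lowers weights. Hence $g\cdot\nu_i=\Delta^{\omega_i}(g)\,\nu_i+(\text{lower})$, so $\Delta^{\omega_i}(g)$ is exactly the coefficient of $\nu_i$ in $g\cdot\nu_i$. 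Letting $\xi_i\in V_i^{*}$ be the covector that extracts the $\nu_i$-component, this reads $\Delta^{\omega_i}(x)=\langle\xi_i,x\cdot\nu_i\rangle$ on $G_0$; since both sides are regular functions on $G$ and $G_0$ is dense, the identity holds for all $x\in G$.

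Next I would apply this to $x=\tilde{w}^{-1}g$. By definition $\Delta_{w\cdot\omega_i,\omega_i}(g)=\Delta^{\omega_i}(\tilde{w}^{-1}g)=\langle\xi_i,\tilde{w}^{-1}g\cdot\nu_i\rangle$, which is the coefficient of $\nu_i$ in $\tilde{w}^{-1}(g\cdot\nu_i)$. This coefficient comes solely from the weight-$w\cdot\omega_i$ component of $g\cdot\nu_i$, because $\tilde{w}^{-1}$ carries $V_i[w\cdot\omega_i]$ onto $V_i[\omega_i]$. Here I invoke the standard fact that $w\cdot\omega_i$ is an extremal weight, so $\dim V_i[w\cdot\omega_i]=\dim V_i[\omega_i]=1$, and this line is spanned by $\tilde{w}\cdot\nu_i$. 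Writing the weight-$w\cdot\omega_i$ component of $g\cdot\nu_i$ as $c_w\,\tilde{w}\cdot\nu_i$ and applying $\tilde{w}^{-1}$ produces $c_w\,\nu_i$, whence $\Delta_{w\cdot\omega_i,\omega_i}(g)=c_w$, using the normalization $\langle\xi_i,\nu_i\rangle=1$.

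Decomposing $g\cdot\nu_i$ into its weight components then gives $g\cdot\nu_i=\sum c_w\,\tilde{w}\cdot\nu_i+(\text{non-extremal weight vectors})$, the first sum running over the extremal weights $w\cdot\omega_i$ and the remaining terms lying outside the orbit $\mathcal{O}_W$. Substituting $c_w=\Delta_{w\cdot\omega_i,\omega_i}(g)$ yields the claimed formula, with the dots denoting precisely the non-extremal part. The one genuinely delicate point is bookkeeping the redundancy in the index set: distinct $w$ in the same coset of the stabilizer $\mathrm{Stab}_W(\omega_i)$ produce the same extremal weight, and I would record that both $\Delta_{w\cdot\omega_i,\omega_i}$ and the line $\mathbb{C}\,\tilde{w}\cdot\nu_i$ depend only on $w\cdot\omega_i$ (the former by the Fomin--Zelevinsky well-definedness of generalized minors, the latter by one-dimensionality), so the sum is effectively indexed by $W/\mathrm{Stab}_W(\omega_i)$. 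The main obstacle is therefore not any single hard estimate but making the matrix-coefficient identity precise and correctly handling these stabilizer cosets together with the choice of representatives $\tilde{w}$.
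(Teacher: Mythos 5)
Your argument is correct, and in fact the paper offers no proof of this Proposition at all --- it is quoted as a consequence of the Fomin--Zelevinsky setup --- so there is nothing to compare against except the standard derivation, which is exactly what you give. The matrix-coefficient identity $\Delta^{\omega_i}(x)=\langle \xi_i, x\cdot\nu_i\rangle$ (established on the big cell via $g=n_-hn_+$ and extended by density and regularity), together with the fact that $\tilde{w}^{-1}$ sends the extremal one-dimensional weight space $V_i[w\cdot\omega_i]=\mathbb{C}\,\tilde{w}\cdot\nu_i$ onto $V_i[\omega_i]$, is precisely the canonical route. Your two flagged subtleties are real and well handled: the sum over $w\in W$ in the statement should be read modulo $\mathrm{Stab}_W(\omega_i)$ (each summand $\Delta_{w\cdot\omega_i,\omega_i}(g)\,\tilde{w}\cdot\nu_i$ depends only on the coset), and the product $\Delta_{w\cdot\omega_i,\omega_i}(g)\,\tilde{w}\cdot\nu_i$ is independent of the choice of lift $\tilde{w}$, since replacing $\tilde{w}$ by $\tilde{w}t$ with $t\in H$ rescales the minor by $[t]^{-\omega_i}$ and the vector by $[t]^{\omega_i}$. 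No gaps.
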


The set of generalized minors $\{\Delta_{w\cdot \omega_i, \omega_i}\}_{w\in W; i=1, \dots, r}$ creates a set of coordinates on $G/B^+$, known as {\it generalized Pl\"ucker coordinates}. In particular, the set of zeroes of each of $\Delta_{w\cdot \omega_i, \omega_i}$ is a uniquely and unambiguously defined hypersurface in $G/B$. This feature is important for characterizing Schubert cells as quasi-projective subvarieties of a generalized flag variety, see \cite{Fomin:wy} for details. 

We started this section from the explicit definition of the principal minors by means of Gaussian decomposition. The following proposition (see Corollary 2.5 in \cite{FZ}) provides a necessary and sufficient condition of its existence for a given group element.

\begin{Prop}\label{gausscond}
An element $g\in G$ admits the Gaussian decomposition if and only if $\Delta^{\omega_i}(g)\neq 0$ for any $i=1, \dots, r$.
\end{Prop}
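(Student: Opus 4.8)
The plan is to pass from $g$ to its action on the highest weight vectors $\nu_i$ and to translate the existence of a Gaussian decomposition into a Bruhat-cell condition in the flag variety $G/B_+$. The easy implication is immediate: if $g=n_-hn_+\in G_0$, then by the definition of the principal minors $\Delta^{\omega_i}(g)=[h]^{\omega_i}\in\mathbb{C}^{\times}$, so each $\Delta^{\omega_i}(g)$ is nonzero.

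For the converse, the first step is to reinterpret the principal minor representation-theoretically. Let $\xi_i\in V_i^{*}$ be the covector dual to the highest weight line, normalized by $\langle\xi_i,\nu_i\rangle=1$ and vanishing on every weight space other than $\mathbb{C}\nu_i$. The expansion $g\cdot\nu_i=\sum_{w}\Delta_{w\cdot\omega_i,\omega_i}(g)\,\tilde w\cdot\nu_i+\dots$ from the preceding Proposition identifies $\Delta^{\omega_i}(g)=\Delta_{\omega_i,\omega_i}(g)$ with the coefficient of $\nu_i$ in $g\nu_i$, i.e. $\Delta^{\omega_i}(g)=\langle\xi_i,g\nu_i\rangle$; both sides are regular functions agreeing on the dense set $G_0$, so this holds on all of $G$.

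Next I would reduce membership in $G_0$ to a condition on a Weyl group element. Since $B_+=HN_+$ we have $G_0=N_-HN_+=N_-B_+$, so for $\pi:G\to G/B_+$ the element $g$ lies in $G_0$ if and only if $\pi(g)$ lies in the open cell $\mathcal{U}=N_-\cdot eB_+$. Writing $g=b_-\tilde w b_+$ for the unique $w\in W$ furnished by the Bruhat decomposition $G=\bigsqcup_{w\in W}B_-\tilde w B_+$, membership $g\in G_0$ is equivalent to $w=e$. Thus it remains to show that $\Delta^{\omega_i}(g)\neq0$ for all $i$ forces $w=e$. Here is the crux: since $b_+\nu_i\in\mathbb{C}^{\times}\nu_i$, we have $g\nu_i\in\mathbb{C}^{\times}\,b_-\tilde w\nu_i$, and $b_-\tilde w\nu_i$ has leading term of weight $w\omega_i$ together with terms of weight $w\omega_i-\beta$ for nonzero $\beta$ in the positive root lattice. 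For the weight-$\omega_i$ component extracted by $\xi_i$ to survive we would need either $w\omega_i=\omega_i$ or $w\omega_i=\omega_i+\beta$ with $\beta$ a nonzero sum of positive roots; the latter is impossible because $\omega_i$ is the highest weight of $V_i$, whence $w\omega_i\le\omega_i$. Therefore $\Delta^{\omega_i}(g)\neq0$ forces $w\omega_i=\omega_i$.

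Imposing $w\omega_i=\omega_i$ for every $i=1,\dots,r$ gives $w\rho=\rho$ with $\rho=\sum_i\omega_i$; since $\rho$ is regular its stabilizer in $W$ is trivial, so $w=e$ and $g\in G_0$. I expect the main obstacle to be this last computational step: setting up the weight bookkeeping cleanly and justifying rigorously that $w\omega_i$ is the unique maximal weight occurring in $b_-\tilde w\nu_i$, with all other weights strictly lower. Everything else is a matter of assembling the standard facts about the Bruhat decomposition and the big cell.
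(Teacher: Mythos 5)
Your proof is correct. Note that the paper gives no argument for this proposition at all --- it is imported directly from Fomin--Zelevinsky (Corollary 2.5 of \cite{FZ}) --- and your argument is essentially the standard one behind that citation: identify $\Delta^{\omega_i}(g)$ with the matrix coefficient $\langle\xi_i,g\nu_i\rangle$, place $g$ in a Bruhat cell $B_-\tilde{w}B_+$, observe that every weight occurring in $b_-\tilde{w}\nu_i$ lies in $w\omega_i$ minus the positive root cone so that $\Delta^{\omega_i}(g)\neq 0$ forces $w\omega_i=\omega_i$, and then kill $w$ using the regularity of $\rho=\sum_i\omega_i$. All the steps are sound; the one point worth spelling out is that the regular extension of $\Delta^{\omega_i}$ off $G_0$ is unique (a regular function on the irreducible variety $G$ is determined by its restriction to a dense open subset), which is what licenses using $\langle\xi_i,g\nu_i\rangle$ as the formula for $\Delta^{\omega_i}(g)$ at points outside $G_0$.
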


Finally, we introduce the fundamental relation (\cite{FZ}, Theorem 1.17) between generalized minors, which will be crucial in the following. 

\begin{Prop}
Let, $u,v\in W$, such that for 
$i\in \{1, \dots, r\}$,  $\ell(uw_i)=\ell(u)+1$,  $\ell(vw_i)=\ell(v)+1$. Then 
\begin{equation}\label{eq:minorsgen}
\Delta_{u\cdot\omega_i, v\cdot\omega_i}(g)\Delta_{uw_i\cdot \omega_i, vw_i\cdot\omega_i}(g)-
\Delta_{uw_i\cdot\omega_i, v\cdot\omega_i}(g)\Delta_{u\cdot\omega_i, vw_i\cdot\omega_i}(g)=\prod_{j\neq i}\Big[\Delta_{u\cdot\omega_j, v\cdot\omega_j}(g)\Big]^{-a_{ji}}~.
\end{equation}
\end{Prop}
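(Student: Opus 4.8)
The plan is to prove the generalized minor identity \eqref{eq:minorsgen} by reducing it to a statement about $SL(2)$ matrix entries via the embedding determined by the simple root $\alpha_i$, exploiting the fact that generalized minors are designed precisely so that this reduction is clean.

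First I would set up the key structural observation. For the factor involving the index $i$, the quantities $\Delta_{u\cdot\omega_i,v\cdot\omega_i}$ and $\Delta_{uw_i\cdot\omega_i,vw_i\cdot\omega_i}$ are generalized minors attached to the representation $V_i$, while the right-hand side collects minors attached to $V_j$ for $j\neq i$. The length conditions $\ell(uw_i)=\ell(u)+1$ and $\ell(vw_i)=\ell(v)+1$ are exactly what guarantees that $u\cdot\omega_i$ and $uw_i\cdot\omega_i$ (respectively $v\cdot\omega_i$ and $vw_i\cdot\omega_i$) are the two distinct weights in the $i$-th root $SL(2)$-string through the extremal weight, differing by $\alpha_i$. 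Concretely, I would use the reflection formula $w_i\cdot\omega_i=\omega_i-\alpha_i$ together with the characterization of $\Delta_{u\cdot\omega_i,v\cdot\omega_i}(g)=\Delta^{\omega_i}(\tilde u^{-1}g\tilde v)$ from the definition.

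The main computational step is to replace $g$ by $g'=\tilde u^{-1} g\,\tilde v$, which reduces the four minors on the left to $\Delta^{\omega_i}(g')$, $\Delta^{\omega_i}(\tilde s_i^{-1} g'\tilde s_i)$, $\Delta^{\omega_i}(\tilde s_i^{-1}g')$, and $\Delta^{\omega_i}(g'\tilde s_i)$, where $s_i$ is the simple reflection. I would then restrict attention to the rank-one subgroup $G_i=\langle e_i,f_i,\check\alpha_i\rangle\cong SL(2)$ (or its $PGL(2)$ image): the left-hand side becomes precisely the $SL(2)$ determinant identity $ad-bc=\det$, where $a,b,c,d$ are the relevant $2\times 2$ block entries built from how $g'$ acts between the weight spaces of weight $\omega_i$ and $\omega_i-\alpha_i$. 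The determinant of this $2\times 2$ block, by the standard theory of generalized minors, equals the product $\prod_{j\neq i}[\Delta_{u\cdot\omega_j,v\cdot\omega_j}(g)]^{-a_{ji}}$, because the $i$-th $SL(2)$-determinant is controlled by the minors in the neighboring fundamental representations through the off-diagonal Cartan entries $a_{ji}=\langle\alpha_i,\check\alpha_j\rangle$.

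The hard part will be establishing that last equality rigorously, namely that the $SL(2)_i$ determinant of the extremal block equals the Cartan-weighted product of the other fundamental minors; this is where the representation-theoretic content genuinely sits, and it is essentially the content of Theorem 1.17 of \cite{FZ}. Since both sides of \eqref{eq:minorsgen} are regular functions on $G$, it suffices to verify the identity on the dense big cell $G_0$, so I would first check it there using the Gaussian decomposition $g=n_-hn_+$ and Proposition~\ref{gausscond}, and then invoke Zariski density to extend it to all of $G$. Because this is exactly the statement quoted as Theorem 1.17 in \cite{FZ}, in the paper one may legitimately cite it rather than reprove it; my sketch indicates the route a self-contained argument would follow.
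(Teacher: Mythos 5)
The paper offers no proof of this Proposition at all --- it simply cites Theorem 1.17 of \cite{FZ} --- and your proposal correctly identifies both that this citation suffices and the route that FZ's own proof takes (reduce to the case $u=v=e$ by passing to $g'=\tilde u^{-1}g\,\tilde v$, verify the resulting $2\times 2$ determinant identity on the big cell $G_0$ via the Gaussian decomposition, and extend to all of $G$ by Zariski density of $G_0$). The one step you leave implicit, namely that the determinant of the extremal $2\times 2$ block equals $\prod_{j\neq i}\big[\Delta_{u\cdot\omega_j, v\cdot\omega_j}(g)\big]^{-a_{ji}}$, rests on the weight computation $2\omega_i-\alpha_i=-\sum_{j\neq i}a_{ji}\omega_j$ (using $a_{ii}=2$), which identifies $\nu_i\wedge f_i\nu_i$ as a highest weight vector of that weight in $\wedge^2 V_i$ and is precisely where the representation-theoretic content of \cite{FZ} sits.
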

\section{$G$-Wronskians}

\subsection{Differential equations and $G$-Wronskian}

Consider the irreducible  representation $V_i$ of $G$ with highest weight
$\omega_i$. It comes equipped with a 2-dimensional subspace $W_i=L^+_i\oplus L^-_i$, 
$L^+_i=\mathbb{C}\nu_{i}$, $L^-_i=\mathbb{C}f_i\nu_{i}$, 
which is invariant under the action of $B_+$. 

Suppose we have a principal $G$-bundle $\cF_G $ and its $B_+$-reduction $\cF_{B_+} $ and thus an $H$-reduction  $\cF_{H}$, where $H=B_+/[B_+,B_+]$ as well.
Therefore for each
$i=1,\ldots,r$, the vector bundle
$$
\cV_i = \cF_{B_+} \underset{B_+}\times V_i = \cF_G \underset{G}\times
V_i
$$
associated to $V_i$ contains an $H$-line subbundles
$$
\cL^+_i = \cF_{H} \underset{H}\times L^+_i, \quad \cL^{-}_{i} = \cF_{H} \underset{H}\times L^{-}_{i}
$$
associated to $L_i^{\pm} \subset V_i$.

\begin{Def}
Generalized $G$-Wronskian on $\mathbb{P}^1$ is the quadruple $(\cF_G,\cF_{B_+}, \mathscr{G}, \nabla^Z)$, where $\mathscr{G}$ is a meromorphic section of a principle bundle $\cF_G$, $\cF_{B_+}$ is a reduction of $\cF_G$ to $B_+$, $\nabla^Z$ is an $H$-connection, so that  $H=B_+/[B_+,B_+]$, satisfying the following condition.
There exist a Zariski open dense subset $U\subset \mathbb{P}^1$ 
together with the trivialization $\imath_{B_+}$ of $\cF_{B_+}$ so that $i_{\frac{d}{dz}}\nabla^Z=\nabla^Z_z=\partial_z-Z$, where 
$Z\in \mathfrak{h}=\mathfrak{b}_+/[\mathfrak{b}_+,\mathfrak{b}_+]$, 
so that 
for certain sections $\{v_i^{\pm}\}_{i=1, \dots, r}$ of line bundles  $\{\mathcal{L}^{\pm}_i\}_{i=1, \dots, r}$ on $U$ we have $\mathscr{G}$ as an element of $G(z)$ satisfy the following conditions:
\begin{eqnarray}
\nabla^Z_z(\mathscr{G}\cdot v^+_{i})=\mathscr{G}\cdot v^-_{i}, \quad  i=1,\dots, r
\end{eqnarray}
\end{Def}

In local terms we have the following. Representing the corresponding section $\mathscr{G}=\mathscr{G}(z)\in G(z)$, we have:
\begin{eqnarray}
(\partial_z-Z)\mathscr{G}(z)\nu_{i}=\mathscr{G}(z)p^{\phi}_{-1}(z)\nu_{i},
\end{eqnarray}
where $p^{\phi}_{-1}(z)=\sum^r_{i=1}\phi_i(z)f_i$, and $\phi_i(z)\in \mathbb{C}(z)$. 
We are interested in the case when $\{\phi_i(z)\}_{i=1,\dots , r}$ are polynomials.

\begin{Def}
Generalized $G$-Wronskian has regular singularities if 
$p^{\phi}_{-1}(z)=p^{\Lambda}_{-1}(z)=\sum^r_{i=1}\Lambda_i(z)f_i$, where 
$\Lambda_i(z)\in \mathbb{C}[z]$.
\end{Def}

We also are interested to impose some non-degeneracy conditions on $\mathscr{G}(z)$, which has to do with their generalized minor structure. 

\begin{Def}
We say that generalized $G$-Wronskian with regular singularities is {\it nondegenerate} if  $\Delta_{w\cdot \omega_i,\omega_i}(\mathscr{G}(z))$ are nonzero polynomials for all $w\in W$ and $i=1,\dots, r$.  
\end{Def}

Now let us look at the equations, satisfied by non-degenerate genralized $G$-Wronskians: 
\begin{eqnarray}\label{eqnreg}
(\partial_z-Z)\mathscr{G}(z)\nu_{i}=\mathscr{G}(z)p^{\Lambda}_{-1}(z)\nu_{i}
\end{eqnarray}

Restricting this equation to $W_i\subset V_i$, we have the following:
\begin{eqnarray}\label{relations}
&&(\partial_z-\langle Z, \omega_i\rangle )\Delta_{\omega_i, \omega_i}(\mathscr{G}(z))=\Lambda_i(z)\Delta_{\omega_i, w_i\cdot\omega_i}(\mathscr{G}(z))\nonumber\\
&&(\partial_z-\langle Z, \omega_i-\alpha_i\rangle )\Delta_{w_i\cdot\omega_i, \omega_i}(\mathscr{G}(z))=\Lambda_i(z)\Delta_{w_i\cdot\omega_i, w_i\cdot\omega_i}(\mathscr{G}(z))
\end{eqnarray}
Let us make use of the relation (\ref{eq:minorsgen}), when $u,v=1$, applying it to $\mathscr{G}(z)$:

\begin{equation}\label{eq:minorsgens}
\Delta_{\omega_i, \omega_i}(\mathscr{G}(z))\Delta_{w_i\cdot \omega_i, w_i\cdot\omega_i}(\mathscr{G}(z))-
\Delta_{w_i\cdot\omega_i, \omega_i}(\mathscr{G}(z))\Delta_{\omega_i, w_i\cdot\omega_i}(\mathscr{G}(z))=\prod_{j\neq i}\Big[\Delta_{\omega_j,\omega_j}(\mathscr{G}(z))\Big]^{-a_{ji}}.
\end{equation}

With the help of (\ref{relations}) we obtain: 
\begin{eqnarray}
&&\Lambda_i^{-1}(z)\Delta_{\omega_i, \omega_i}(\mathscr{G}(z))(\partial_z-\langle Z, \omega_i-\alpha\rangle )\Delta_{w_i\omega_i, \omega_i}(\mathscr{G}(z))-\nonumber\\
&&\Lambda_i^{-1}(z)
\Delta_{w_i\cdot\omega_i, \omega_i}(\mathscr{G}(z))(\partial_z-\langle Z, \omega_i\rangle )\Delta_{\omega_i, \omega_i}(\mathscr{G}(z))=\prod_{j\neq i}\Delta_{\omega_j,\omega_j}^{-a_{ji}}(\mathscr{G}(z)). 
\end{eqnarray}

Denoting $\Delta_{\omega_i, \omega_i}(\mathscr{G}(z))=q^i_+(z)$, $\Delta_{w_i\cdot\omega_i, \omega_i}(\mathscr{G}(z))=q^i_-(z)$, then simplifying, and collecting such equations for all fundamental weights, we arrive to the following system:

\begin{eqnarray}\label{qq}
&&q^i_+(z)\partial_z q^i_-(z)-q^i_-(z)\partial_z q^i_+(z)+\langle Z, \alpha_i\rangle q^i_+(z) q^i_-(z)=\nonumber\\
&&\Lambda_i(z)\prod_{j\neq i}\Big[q_+^j(z)\Big]^{-a_{ji}}, \quad i=1,\dots, r.
\end{eqnarray}

Applying $\tilde{w}\in G$ which is a lift of the element of $w\in W$ to (\ref{eqnreg}), we have: 
\begin{eqnarray}\label{eqnregw}
(\partial_z-Z^w)\mathscr{G}^w(z)\nu_{i}=\mathscr{G}^w(z)p^{\Lambda}_{-1}(z)\nu_{i}~,
\end{eqnarray}
where $Z^w\in \tilde{w}Z\tilde{w}^{-1}$ and $\mathscr{G}^w(z)=\tilde{w}~ \mathscr{G}(z)$. 
Let us denote $\Delta_{w^{-1}\cdot \omega_i, \omega_i}(\mathscr{G}(z))=q^{i,w}_+(z)$. Then, if $\ell({w^{-1}s_i})=\ell({w^{-1}})+1$, we have 
$\Delta_{w^{-1}\cdot \omega_i, \omega_i}(\mathscr{G}(z))=q^{i,w}_+(z)$, so that $\Delta_{s_i\cdot \omega_i, \omega_i}(\mathscr{G}(z))=q^{i}_-(z)$ and the following relations hold:
\begin{eqnarray}\label{qqfull}
&&q^{i,w}_+(z)\partial_z q^{i,w}_-(z)-q^{i,w}_-(z)\partial_z q^{i,w}_+(z)+\langle Z^w, \alpha_i\rangle q^{i,w}_+(z) q^{i,w}_-(z)=\nonumber \\
&&\Lambda_i(z)\prod_{j\neq i}\Big[q_+^{j,w}(z)\Big]^{-a_{ji}}, \quad i=1,\dots, r; \quad w\in W. 
\end{eqnarray}

\begin{Def}
 The collection of equations (\ref{qq}) on polynomials $\{q^i_{\pm}\}_{i=1,\dots ,r}(z)$ is called the {\it $qq$-system}. 
 The collection of equations (\ref{qqfull}) is called {\it the full $qq$-system}.
\end{Def}

Using this definition we can formulate the following Theorem.

\begin{Thm}\label{wrthm}
i) The element $\mathscr{G}(z)\in G(z)$, which defines the nondegenerate $G$-Wronskian has a Gaussian decomposition $\mathscr{G}(z)=\mathscr{B}_-(z)\mathscr{N}_+(z)$. \\
ii) Generalized minors, which determine $\mathscr{B}_-(z)$ are the solutions of the $qq$-system, according to the formula $q^{i,w}_+(z)=\Delta_{w^{-1}\cdot \omega_i, \omega_i}(\mathscr{G}(z))$.\\
iii) Representing $\mathscr{N}_+(z)=e^{{n}_+(z)}$, the restriction $n_+(z)|_{\mathfrak{b}_{+,1}(z)}$ is determined by the solution of the $qq$-system.     \\
iv) Given a solution $\mathscr{G}(z)$ of (\ref{eqnregw}), we obtain that $\mathscr{G}(z)U_+(z)$, where  
$U_{+}(z)\in [N_+,N_+](z)$, i.e. $U_+(z)=e^{u_+(z)}$, so that ${u}_+(z)\in \oplus_{k\ge 2}\mathfrak{n}_{k}$. 
\end{Thm}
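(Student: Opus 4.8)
The plan is to treat the four assertions in order, using Proposition~\ref{gausscond}, the computation that already produced \eqref{qq}--\eqref{qqfull}, and some elementary highest-weight bookkeeping in the principal gradation.

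For (i) I would simply invoke Proposition~\ref{gausscond}: the Gaussian decomposition $\mathscr{G}(z)=\mathscr{B}_-(z)\mathscr{N}_+(z)$ over $\mathbb{C}(z)$ exists precisely when every principal minor $\Delta^{\omega_i}(\mathscr{G}(z))=\Delta_{\omega_i,\omega_i}(\mathscr{G}(z))=q^i_+(z)$ is nonzero. Nondegeneracy of the $G$-Wronskian guarantees that the $q^i_+(z)$ are nonzero polynomials, so the decomposition holds away from the finite set where they vanish. For (ii), observe first that $\Delta_{w\cdot\omega_i,\omega_i}(\mathscr{G})=\Delta^{\omega_i}(\tilde w^{-1}\mathscr{G})$ depends on $\mathscr{G}$ only through its class modulo $B_+$, hence only through $\mathscr{B}_-$; these are exactly the generalized Pl\"ucker coordinates of the point $\mathscr{B}_-B_+\in G/B_+$ and so determine $\mathscr{B}_-$. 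That they solve the full $qq$-system \eqref{qqfull} is precisely the computation preceding the theorem: restricting \eqref{eqnregw} to $W_i$ yields the ($w$-twisted) relations \eqref{relations}, and eliminating $\Lambda_i(z)$ by means of the Fomin--Zelevinsky identity \eqref{eq:minorsgen} with $u=v=1$, applied to $\mathscr{G}^w$, produces \eqref{qqfull} with $q^{i,w}_+=\Delta_{w^{-1}\cdot\omega_i,\omega_i}(\mathscr{G})$.

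For (iii) I would write $\mathscr{N}_+=e^{n_+}$ with $n_+=\sum_i c_i(z)\,e_i+(\text{degree}\ge 2)$ and isolate $c_i$. Since $\nu_i$ is highest weight, $e^{n_+}\nu_i=\nu_i$, so $\mathscr{G}\nu_i=\mathscr{B}_-\nu_i$ carries no information about $n_+$; the new data sits in $\mathscr{G}f_i\nu_i=\mathscr{B}_-e^{n_+}f_i\nu_i$. A height count in the principal gradation shows that only the linear part of $n_+$ can contribute to the weight-$\omega_i$ component of $e^{n_+}f_i\nu_i$: passing from the weight $\omega_i-\alpha_i$ of $f_i\nu_i$ to $\omega_i$ requires adding $\alpha_i$, of height $1$, while every monomial of degree $\ge 2$ raises the weight by a sum of positive roots of total height $\ge 2$. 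Using $e_j f_i\nu_i=\delta_{ij}\nu_i$, the weight-$\omega_i$ component is $c_i\nu_i$, so, writing $\mathscr{B}_-=\mathscr{N}_-\mathscr{H}$ and pairing with the highest-weight covector $\nu_i^{*}$ (insensitive to the unipotent factor $\mathscr{N}_-$ and to all components of weight below $\omega_i$), one gets $\langle\nu_i^{*},\mathscr{G}f_i\nu_i\rangle=c_i(z)\,q^i_+(z)$. On the other hand this matrix coefficient equals $\Delta_{\omega_i,w_i\cdot\omega_i}(\mathscr{G}(z))$ up to the normalization of the lift $s_i$, and by the first line of \eqref{relations} that minor is $\Lambda_i^{-1}(z)(\partial_z-\langle Z,\omega_i\rangle)q^i_+(z)$. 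Solving for $c_i(z)$ expresses $n_+|_{\mathfrak{b}_{+,1}(z)}$ rationally in terms of $q^i_+$, $\Lambda_i$ and $\langle Z,\omega_i\rangle$, i.e.\ in terms of the $qq$-system.

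For (iv) the key fact is that $U_+=e^{u_+}$ with $u_+\in\oplus_{k\ge 2}\mathfrak{n}_k$ acts as the identity on each $W_i$. Indeed $U_+\nu_i=\nu_i$, while for any positive root $\beta$ of height $\ge 2$ the element $\omega_i-\alpha_i+\beta$ is not a weight of $V_i$, since $\omega_i-(\omega_i-\alpha_i+\beta)=\alpha_i-\beta$ has negative height and so is not a nonnegative combination of simple roots; hence $u_+f_i\nu_i=0$ and $U_+f_i\nu_i=f_i\nu_i$. Since $U_+(z)\nu_i$ and $U_+(z)f_i\nu_i$ are therefore $z$-independent, $\mathscr{G}U_+$ again satisfies \eqref{eqnregw} with the same $Z$ and $\Lambda_i$. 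Conversely, two nondegenerate $G$-Wronskians sharing the same $qq$-solution have the same $\mathscr{B}_-$ by (ii), so they differ by right multiplication by some $e^{x}\in N_+(z)$; a Baker--Campbell--Hausdorff computation shows that the degree-one part of $n_+$ is unchanged under such multiplication, so by (iii) $x$ has vanishing degree-one part, whence $x\in\oplus_{k\ge 2}\mathfrak{n}_k$ and $e^{x}\in[N_+,N_+](z)$. The most delicate point is the bookkeeping in (iii)---fixing the precise constant relating $\langle\nu_i^{*},\mathscr{G}f_i\nu_i\rangle$ to $\Delta_{\omega_i,w_i\cdot\omega_i}(\mathscr{G})$ through the chosen Weyl lift $s_i$---after which (iv) is immediate from the same principle that $W_i$ detects only the degree-one part $\mathfrak{b}_{+,1}$ of $\mathfrak{n}_+$.
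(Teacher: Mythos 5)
Your proposal is correct and follows essentially the same route as the paper's (very terse) proof: (i) via Proposition~\ref{gausscond} and nondegeneracy, (ii) via the computation preceding the theorem, (iii) by observing that only the degree-one part of $n_+$ can contribute to the action on $f_i\nu_i$, and (iv) from $e^{u_+(z)}|_{W_i}=\mathrm{id}_{W_i}$. The extra bookkeeping you supply in (iii) (recovering $c_i(z)=\Lambda_i(z)^{-1}(\partial_z\log q^i_+(z)-\langle Z,\omega_i\rangle)$, matching \eqref{upper}) and the converse direction in (iv) are correct elaborations of what the paper leaves implicit.
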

\begin{proof}
To prove $i)$, it is enough to refer to nondegeneracy condition of $\mathscr{G}(z)$ and use \ref{gausscond}. 
We derived $ii)$ above. Part $iii)$ follows from the fact that all components of $n_+(z)|_{\mathfrak{b}_{+,1}(z)}$ which is a linear combination of $e_i$'s appear in the right hand side of (\ref{eqnreg}), since they act on $f_i\nu_i$. For the same reason $iv)$ is true since $e^{u_+(z)}|_{W_i}={\rm id}_{W_i}$, if ${u}_+(z)\in \oplus_{k\ge 2}\mathfrak{n}_{k}$.
\end{proof}

Part  iv) of the above Theorem motivates the following definition.

\begin{Def}
Two $G$-Wronskians $\mathscr{G}_1(z)$, $\mathscr{G}_2(z)$ are called {\it equivalent }if $\mathscr{G}_1(z)=\mathscr{G}_2(z)U_+(z)$, where $U_{+}(z)\in [N_+,N_+](z)$.
\end{Def}

Theorem (\ref{wrthm}) implies that there is a map from equivalence classes of nondegenerate $G$-Wronskians to the solution of the full $qq$-systems, where $\{q^{w,i}_+(z)\}_{w\in W, i=1,\dots, r}$ are nonzero. As one could guess, we would like an inverse map. We will construct it in the last section. An important tool for that is the concept 
of $Z$-twisted Miura $G$-oper with regular singularities, which we give a review in the next section.

\section{$Z$-twisted $G$-opers on $\mathbb{P}^1$ and the $qq$-systems}

\subsection{$Z$-twisted $G$-opers on $\mathbb{P}^1$}
We now define meromorphic $G$-oper conenctions, or, simply, $G$-opers, on $\P^1$. 
Let us consider the pair $(\cF_G,\nabla)$ of a principal $G$-bundle on $\P^1$ and a
connection, which is automatically flat.  Let
$\cF_{B_+}$ be a reduction of $\cF_G$ to the Borel subgroup $B_+$.
If $\nabla'$ is any connection which preserves $\cF_{B_+}$, then
$\nabla-\nabla'$ induces a well-defined one-form on $\P^1$ with values
in the associated bundle $(\mathfrak{g}/\mathfrak{b}_+)_{\cF_{B_+}}$. 
We denote this 1-form by $\nabla/\cF_{B_-}$.

Following \cite{Beilinson:2005} (see also \cite{Frenkel_LanglandsLoop}) we will define a $G$-oper as a
$G$-connection $(\cF_G,\nabla)$ together with a  reduction 
$\cF_{B_+}$, such that this reduction is not preserved by the 
connection but instead satisfies a certain condition on the 1-form $\nabla/\cF_{B_+}$.

Let $\bf{O}\in
[\mathfrak{n}_+,
\mathfrak{n}_+]^{\perp}/\mathfrak{b}_+\in\mathfrak{g}/\mathfrak{b}_+$
be the open  $B_+$-orbit consisting of vectors stabilized by $N_+$ and
such that all of the simple root components with
respect to the adjoint action of $B_+/N_+$, are non-zero, where the
orthogonal complement is taken with respect to the Killing form.

\begin{Def}    \label{op}
A meromorphic $G$-{\em oper} on $\mathbb{P}^1$ is a triple $(\cF_G,\nabla,\cF_{B_+})$, 
where pair ($\cF_G, \Delta)$ is a principal $G$-bundle on $\P^1$ with a meromorphic connection and $\mathcal{F}_{B_+}$ is a reduction of $\cF_G$ to $B_+$ satisfying the following condition: there exists a Zariski open dense subset $U \subset \P^1$ together with a trivialization of $\mathcal{F}_{B_+}$ such that the restriction of the 1-form $\nabla/\cF_{B_+}$ to $U$, written as an element of $\mathfrak{g}/\mathfrak{b}_+(z)$, belongs to $\mathbf{O}(z)$.
\end{Def}

Using the trivialization $\imath_{B_-}$, the $G$-oper can be written as a differential operator:
\begin{equation}    \label{op1}
\nabla=\partial_z+\sum^r_{i=1}\phi_i(z)f_i+b(z)
\end{equation}
where $\phi_i(z) \in\C(z)$ and  $b(z)\in \mathfrak{b}_+(z)$ are
regular on $U$ and moreover $\phi_i(z)$ has no zeros in $U$.

One can impose the following restrictions on $\phi_i$:

\begin{Def}
We say that a meromorphic $G$-oper has regular singularities if $\phi_i(z)=\Lambda_i(z)$, where $\Lambda_i(z)\in \mathbb{C}[z]$ for all $i=1,\dots, r$.
\end{Def}

\subsection{$Z$-twisted Miura $G$-opers with regualr singularities}
\begin{Def}    \label{Miura}
  A {\em Miura $G$-oper} on $\mathbb{P}^1$ is a quadruple
 $(\cF_G,\nabla,\cF_{B_+},\cF_{B_-})$, where $(\cF_G,\nabla,\cF_{B_+})$ is a
  meromorphic $G$-oper on $\P^1$ and $\cF_{B_-}$ is a reduction of
  the $G$-bundle $\cF_G$ to $B_-$ that is preserved by the
  connection $\nabla$.
\end{Def}

Let us discuss the relative position of the two reductions
over any $x\in\P^1$.  This relative position will be an element of the
Weyl group.  To define this, first
note that the fiber
$\cF_{G,x}$ of $\cF_G$ at $x$ is a $G$-torsor with reductions
$\cF_{B_-,x}$ and $\cF_{B_+,x}$ to $B_-$ and $B_+$
respectively.   Under this
isomorphism, $\cF_{B_-,x}$ gets identified with $gB_- \subset G$ and
$\cF_{B_+,x}$ with $hB_+$ for some $g,h\in G$. The quotient $g^{-1}h$
specifies an element of
the double coset space $B_-\backslash G/B_+$.  The Bruhat
decomposition gives a bijection
between this spaces and the Weyl group, so we obtain a well-defined
element of $G$. We say that $\cF_{B_-}$ and $\cF_{B_+}$ have  {\em generic
  relative position} at $x\in\P^1$ if the relative position is the
identity element of $W$.  More concretely, this mean that the quotient
$g^{-1}h$ belongs to the open dense Bruhat cell $B_-B_+ \subset
G$. It turns out the following theorem holds.

\begin{Thm}    \label{gen rel pos}
i) For any Miura $G$-oper on $\mathbb{P}^1$, there exists an open dense subset $V \subset \P^1$ 
such that the reductions $\cF_{B_-}$ and $\cF_{B_+}$ are in generic relative position for all $x \in V$.\\
ii) For any Miura $G$-oper with regular singularities on $\mathbb{P}^1$, there exists a
trivialization of the underlying $G$-bundle $\cF_G$ on an open
dense subset of $\P^1$ for which the oper connection has the form
\begin{equation}    \label{genmiura}
\nabla=\partial_z-\sum^r_{i=1}g_i(z)\check{\alpha}_i+\sum^r_{i=1}{\Lambda_i(z)}f_i,
\end{equation}
where $g_i(z), \phi_i(z)\in \mathbb{C}(z)$.
\end{Thm}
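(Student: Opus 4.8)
The plan is to handle the two parts in turn, in both cases exploiting the interplay between the \emph{flat} reduction $\cF_{B_-}$ and the oper condition on $\cF_{B_+}$. For part i), I would first pass to an open dense $U\subset\P^1$ on which the oper connection and both reductions are defined, and choose a local trivialization of $\cF_G$ adapted to $\cF_{B_-}$, so that the standard $B_-$ is the flat reduction. Since $\nabla$ preserves $\cF_{B_-}$, in this trivialization $\nabla=\partial_z+A(z)$ with $A(z)\in\mathfrak{b}_-(z)$, while $\cF_{B_+}$ is encoded by a section $x\mapsto g(x)B_+$ of the associated flag bundle. By the very definition of relative position, the two reductions are in generic relative position at $x$ exactly when $g(x)$ lies in the big cell $B_-B_+=N_-HN_+$, which by Proposition \ref{gausscond} is equivalent to $\Delta^{\omega_i}(g(x))\neq 0$ for all $i=1,\dots,r$. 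Each $z\mapsto\Delta^{\omega_i}(g(z))$ is regular on $U$, hence either vanishes identically or has finitely many zeros, so it suffices to exclude identical vanishing.

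To exclude it I would use the oper condition as a transversality input. Restricting $\nabla$ to the $B_+$-invariant planes $W_i\subset V_i$, exactly as in the derivation of (\ref{relations}) and as carried out in \cite{Brinson:2021ww}, yields first-order differential relations for the principal minors $\Delta^{\omega_i}(g(z))$ whose inhomogeneous term is proportional to the simple-root coefficient $\Lambda_i(z)$. The oper condition forces $\nabla/\cF_{B_+}\in\mathbf{O}(z)$, i.e.\ every simple-root component is nonvanishing, so these relations are genuinely inhomogeneous; a minor therefore cannot vanish identically without forcing the next minor, and ultimately $\Lambda_i$, to vanish, which contradicts membership in $\mathbf{O}$. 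Hence the zero loci are finite and their complement is the desired $V$. This is the step I expect to be the main obstacle, since it is where flatness of $\nabla$ and the nondegeneracy built into $\mathbf{O}$ must be combined carefully; conceptually, it expresses that the oper reduction moves transversally to the flat $B_-$-frame in every simple-root direction and so cannot stay confined to a proper Schubert cell over an open set.

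For part ii) I would begin from part i) together with the canonical form (\ref{op1}): over $V$ there is a trivialization in which $\cF_{B_+}$ is standard and $\nabla=\partial_z+\sum_{i=1}^r\Lambda_i(z)f_i+b(z)$ with $b(z)\in\mathfrak{b}_+(z)$, the coefficients of the $f_i$ being the polynomials $\Lambda_i(z)$ by the regular-singularities assumption. Because the reductions are in generic relative position on $V$, the flat reduction $\cF_{B_-}$ corresponds to a big-cell-valued section and can thus be brought to the standard $B_-$ by a gauge transformation valued in $N_+(z)\subset B_+(z)$. Such a transformation preserves $\cF_{B_+}$ and keeps the oper in the form (\ref{op1}) with the \emph{same} $\Lambda_i$, since conjugation by $N_+$ acts trivially on $\mathfrak{g}/\mathfrak{b}_+$ modulo $\mathfrak{b}_+$ and the inhomogeneous gauge term lies in $\mathfrak{n}_+\subset\mathfrak{b}_+$. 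After it, $\nabla$ additionally preserves the standard $B_-$, so $A(z)\in\mathfrak{b}_-(z)$; comparing with the oper form, the $\mathfrak{n}_+$-component of $b(z)$ must vanish, whence $b(z)\in\mathfrak{h}(z)$. Writing this Cartan-valued term as $-\sum_{i=1}^r g_i(z)\check\alpha_i$ produces exactly (\ref{genmiura}), which completes the plan.
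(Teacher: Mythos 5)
The paper itself offers no proof of this theorem: it is stated as a known result imported from the oper literature (ultimately Frenkel's work on Miura opers and flag manifolds, as adapted in \cite{Brinson:2021ww}), so your proposal has to be judged on its own terms. Part ii) of your plan is correct and is the standard argument: on the locus of generic relative position the flat reduction is an $N_+(z)$-translate of the standard $B_-$, the gauge transformation by that $N_+(z)$-element preserves $\cF_{B_+}$ and the coefficients $\Lambda_i$ (since $\mathrm{Ad}\,N_+$ acts trivially on the simple-root components of $\mathfrak{g}/\mathfrak{b}_+$ and the inhomogeneous term lies in $\mathfrak{n}_+$), and intersecting $\mathfrak{b}_-(z)$ with $\sum_i\Lambda_i f_i+\mathfrak{b}_+(z)$ forces the residual $\mathfrak{b}_+$-part into $\mathfrak{h}(z)$, giving \eqref{genmiura}.

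Part i) is where there is a genuine gap. Your reduction to the non-vanishing of the $\Delta^{\omega_i}(g(z))$ via Proposition \ref{gausscond} is the right framework, but the exclusion of identical vanishing is not actually carried out. Pairing $\nabla(g(z)\nu_i)$ with the highest-weight functional gives exactly one relation, which shows $\Delta_{\omega_i,\omega_i}(g(z))\equiv 0\Rightarrow\Delta_{\omega_i,w_i\cdot\omega_i}(g(z))\equiv 0$; that kills only two matrix coefficients of $g(z)\nu_i$, which is far from a contradiction. The actual argument requires an induction propagating the vanishing through all matrix coefficients (equivalently, showing that a section confined to a proper Schubert variety cannot satisfy the transversality encoded in $\mathbf{O}$ in \emph{every} simple-root direction), and at each inductive step one must track which functionals annihilate the $\mathfrak{n}_-$- and $\mathfrak{h}$-parts of $A(z)$ --- this is the nontrivial content of the theorem and it is precisely what your sketch replaces with one sentence. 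Moreover, the announced terminus of your chain, ``ultimately forcing $\Lambda_i$ to vanish,'' is not the right contradiction: $\Lambda_i$ (or rather the nonzero rational simple-root coefficient $c_i(z)$ appearing in the $B_-$-adapted frame --- note you are implicitly mixing the two trivializations here, since $\Lambda_i$ is defined in the $B_+$-adapted frame of \eqref{op1}) is a given datum multiplying the next minor; the contradiction one must reach is that $g(z)\nu_i$ itself would vanish identically. As written, the proof of part i) establishes the easy reductions but not the theorem.
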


Let us impose a strong condition, which picks up a subset of opers we are interested in, namely the  Miura $G$-opers, which are gauge equivalent 
to a constant connection.

\begin{Def}
A {\em $Z$-twisted Miura $G$-oper} on $\mathbb{P}^1$ is a Miura $G$-oper that is equivalent to the constant element $Z \in \mathfrak{b}_- \subset \mathfrak{g}(z)$ under the gauge action of $G(z)$.
\end{Def}

Here we immediately can decompose the twist $Z$ into 
\begin{eqnarray}
Z=Z^H+Z^{N_-}, \quad Z^H=\sum^r_{i=1}\zeta_i\check{\alpha}_i~,
\end{eqnarray}
breaking it into Cartan and nilpotent part. 

For untwisted opers, there is a full flag variety $G/B_-$ of
associated  
Miura opers. If $Z=Z^H$, this space is discrete and is one-to-one correspondence with Weyl group $W$. In, general, in the twisted case, we must introduce certain closed
subvarieties of the flag manifold of the form $(G/B_-)_Z=\{gB_-\mid
g^{-1}Zg\in\fb_-\}$, known as \emph{Springer fibers} (see, for example, Chapter 3 of \cite{CG}).  For
$SL(n)$ (or $GL(n)$),  a Springer fiber may be viewed as the space of complete
flags in $\C^n$ preserved by a fixed endomorphism.

\begin{Prop}\cite{Brinson:2021ww}  The map from Miura
$Z$-twisted opers to $Z$-twisted opers is a fiber bundle with fiber
$(G/B_-)_Z$.
\end{Prop}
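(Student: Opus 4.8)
The plan is to compute the fiber of the forgetful map over a fixed $Z$-twisted $G$-oper $(\cF_G,\nabla,\cF_{B_+})$ and then to upgrade the resulting bijection to a local triviality statement. By Definition \ref{Miura}, a Miura structure on this oper is precisely a reduction $\cF_{B_-}$ of $\cF_G$ to $B_-$ preserved by $\nabla$; by part i) of Theorem \ref{gen rel pos} the generic relative position of $\cF_{B_-}$ and $\cF_{B_+}$ is then automatic, so no further condition need be imposed. Hence the fiber over the given oper is exactly the set of $\nabla$-horizontal (flat) $B_-$-reductions, and the content of the Proposition is the identification of this set with the Springer fiber $(G/B_-)_Z=\{gB_-\mid g^{-1}Zg\in\fb_-\}$.

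First I would invoke the defining property of a $Z$-twisted oper to choose a trivialization of $\cF_G$ over a Zariski open dense $U\subset\P^1$ in which the connection takes the constant form $\nabla=\partial_z-Z$ with $Z\in\fb_-$. In this gauge the induced connection on the associated $G/B_-$-bundle, which is trivial over $U$, is $\partial_z$ minus the vector field generated by $Z$, so its horizontal sections are the orbits of the flow of $Z$, namely $z\mapsto \exp(zZ)\,gB_-$ for $gB_-\in G/B_-$. A preserved $B_-$-reduction corresponds to such a horizontal section, and the decisive requirement is that it extend to a global (meromorphic, hence by properness of $G/B_-$ regular) section of the $G/B_-$-bundle over all of $\P^1$. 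Since the meromorphic gauge transformation relating this frame to the original one does not affect meromorphicity, the reduction is admissible if and only if $\exp(zZ)\,gB_-$ is meromorphic in $z$.

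The core of the argument is therefore the algebraicity constraint. Writing $Z=Z^H+Z^{N_-}$, the one-parameter subgroup $\exp(zZ)$ is rational in $z$ only when $Z$ is nilpotent; in the presence of a nonzero semisimple part $Z^H$ the section $\exp(zZ)\,gB_-$ is single-valued and rational exactly when $gB_-$ is fixed by the flow of $Z$, that is $Z\cdot gB_-=gB_-$, which is the infinitesimal condition $g^{-1}Zg\in\fb_-$. Thus the horizontal section is constant and extends over $\P^1$ precisely when $gB_-\in(G/B_-)_Z$, and conversely each point of the Springer fiber produces, via the constant section $z\mapsto gB_-$ transported back by the trivializing gauge, a genuine preserved $B_-$-reduction on $\P^1$. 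This yields the bijection between the fiber and $(G/B_-)_Z$, consistent with the discrete case $Z=Z^H$ where $(G/B_-)_Z\cong W$ and the map becomes a covering.

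Finally, to obtain a fiber bundle rather than a set-theoretic fibration, I would establish local triviality: over a small family of $Z$-twisted opers the gauge transformation bringing $\nabla$ to the constant form $\partial_z-Z$ can be chosen to depend algebraically on the family, since each is produced by solving the same linear gauge-fixing problem, and composing with the constant reductions indexed by $(G/B_-)_Z$ exhibits the total space locally as a product with $(G/B_-)_Z$. I expect the main obstacle to be exactly the algebraicity argument of the third paragraph: one must rule out the transcendental horizontal sections $\exp(zZ)\,gB_-$ for $gB_-\notin(G/B_-)_Z$ and control the behavior at the punctures of $U$ and at $\infty$ to conclude that a meromorphic preserved reduction forces the Springer condition; the local triviality is comparatively routine once the fiber is pinned down.
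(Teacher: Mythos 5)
First, a remark on the comparison itself: the paper does not prove this Proposition --- it is imported verbatim from \cite{Brinson:2021ww} --- so your attempt can only be measured against the argument in that reference. Your overall setup does match it: trivialize so that $\nabla=\partial_z-Z$, identify Miura structures with $\nabla$-preserved $B_-$-reductions, i.e.\ with flat sections $z\mapsto e^{zZ}gB_-$ of the associated $G/B_-$-bundle, and then decide which of these are admissible. The point of divergence is the criterion for admissibility, and that is where your proof has a genuine gap.

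Your third paragraph claims that meromorphicity of $e^{zZ}gB_-$ forces $gB_-$ to be a fixed point of the flow, hence $g^{-1}Zg\in\fb_-$. This is false whenever $Z$ has a nontrivial nilpotent part, and the Proposition is stated for arbitrary $Z\in\fb_-$. Take $G=SL_2$ and $Z=f$: then $(G/B_-)_Z$ is the single point $B_-$, but $e^{zf}$ is polynomial in $z$, so for every $gB_-\ne B_-$ the flat section $e^{zf}gB_-$ is a non-constant \emph{rational} curve in $G/B_-\cong\P^1$, regular on $\A^1$ and extending to $\P^1$ by properness; it defines a perfectly good meromorphic preserved $B_-$-reduction. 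The same phenomenon occurs for mixed $Z=Z^H+Z^{N_-}$ along directions on which the semisimple part acts trivially (e.g.\ when some $\langle\alpha_i,Z^H\rangle=0$), so your hedge ``in the presence of a nonzero semisimple part'' does not rescue the claim: the rational flat sections generally form a set strictly larger than the fixed locus $(G/B_-)_Z$. What actually cuts the fiber down to the Springer fiber is not meromorphicity of the reduction but the requirement that the resulting Miura oper be $Z$-twisted \emph{as a Miura oper}: the gauge transformation to the constant connection must be compatible with $\cF_{B_-}$, i.e.\ lie in $B_-(z)$ in a frame where the reduction is standard (this is how \cite{Brinson:2021ww} sets up the definition, and it is the reading one must give to the looser definition reproduced in this paper). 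That condition forces the flat section to be the constant one $gB_-$ with $g^{-1}Zg\in\fb_-$, which is the identification you want. Your final paragraph on local triviality is reasonable in outline but secondary; the fiber computation is the step that needs to be repaired.
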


Taking the quotient of $\cF_{B_-}$ by
$N_- = [B_-,B_-]$, we obtain
an $H$-bundle $\cF_{B_-}/N_-$ endowed with an 
$H$-connection, which we will refer to as \emph{associated Cartan connection}: $\nabla^H=\partial_z+A^H(z)$, so that 
\begin{equation}    \label{AH}
A^H(z)=\sum^r_{i=1} g_i(z){\check{\alpha}_i}.
\end{equation}

For $Z$-twisted Miura $G$-opers, we immediately obtain that  
\begin{equation}    \label{giyi}
g_i(z)=\zeta_i -y_i(z)^{-1}\partial_zy_i(z),
\end{equation}
where $y_i(z)\in \mathbb{C}(z)$.

\subsection{Nondegenerate $Z$-twisted Miura $G$-opers, $qq$-systems and B\"acklund transformations}

Now we will impose nondegeneracy conditions on $y_i(z)$, which will lead us to the relation between Miura $G$-opers and the $qq$-systems. We will formulate it in the algebraic manner and refer to \cite{Brinson:2021ww} for more geometric formulation.

\begin{Def}\label{nondegop}
A $Z$-twisted Miura $G$-oper is called nondegenerate, if:\\
\noindent i) it has the form 
(\ref{genmiura}) with $g_i(z)$ satisfying (\ref{giyi}), where:
\begin{enumerate}\item $y_i(z)$ are polynomials with no multiple zeros;
    \item if $a_{ik}\ne 0$, then the roots of
      $\Lambda_k(z)$ are distinct from the the zeros and poles of
      $y_i(z)$; and
      \item if $i\ne j$ and there exists $k$ for which $a_{ik},
        a_{jk}\ne 0$, then the zeros and poles of $y_i(z)$ and
  $y_j(z)$ are distinct from each other.
\noindent 
\end{enumerate}
\end{Def}

A related definition, which imposes a similar type of conditions on the solution of the $qq$-system is as follows:

\begin{Def}
A polynomial solution $\{ q^i_+(z),q^i_-(z) \}_{i=1,\ldots,r}$ of
\eqref{qq} is called {\em nondegenerate} if each $q^i_+(z)$ is relatively prime to
$q^i_-(z)$, and the $q^i_+(z)$'s satisfy the conditions in
Definition~\ref{nondegop}. 
\end{Def}


Then the following statement is true.

\begin{Thm} \cite{Brinson:2021ww}
There is one-to-one correspondence between nondegenerate $Z$-twisted Miura $G$-opers and nondegenerate solutions of the $qq$-system, which allows polynomial solutions to the full $qq$-system, so that 
\begin{equation}
y_i(z)=q^i_+(z), \quad i=1, \dots, r.
\end{equation} 
Moreover, any $Z$-twisted Miura $G$-oper is $Z^H$-twisted.
\end{Thm}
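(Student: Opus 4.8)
The plan is to construct explicit maps in both directions and check that they are mutually inverse, having first disposed of the nilpotent part of the twist. It is cleanest to treat the final assertion first. Writing $Z=Z^H+Z^{N_-}$ with $Z^H=\sum_i\zeta_i\check\alpha_i$, I note that in the situation at hand $Z^H$ is regular semisimple: this is the hypothesis under which the relevant Springer fibre is finite and in bijection with $W$, as recalled above. Regularity makes $\mathrm{ad}_{Z^H}$ invertible on $\mathfrak{n}_-$, so there is a \emph{constant} $n\in N_-$ with $\mathrm{Ad}_n(Z^H+Z^{N_-})=Z^H$; since $n\in B_-$ is constant, conjugating by it preserves the reduction $\cF_{B_-}$ and carries a connection of the form \eqref{genmiura} to one of the same shape. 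Hence any $Z$-twisted Miura $G$-oper is $Z^H$-twisted, and I may assume $Z=Z^H$ henceforth. Regularity also yields $\langle Z,\alpha_i\rangle\neq 0$ for every $i$, which I use below.

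For the forward map I set $q^i_+(z):=y_i(z)$ and extract $q^i_-(z)$ from the trivializing gauge transformation $V(z)\in G(z)$, characterized by $V^{-1}\nabla V=\partial_z-Z^H$. Evaluating the $\nabla$-flat sections $V(z)e^{Z^H z}\nu_i$ in each fundamental representation $V_i$ and projecting $V(z)\nu_i$ onto the $B_+$-invariant plane $W_i=L^+_i\oplus L^-_i$ produces the analogue of the pair \eqref{relations}, in which (after passing to the oper trivialization that matches the Cartan connection \eqref{giyi}) the minor $\Delta_{\omega_i,\omega_i}(V)$ is identified with $q^i_+=y_i$ and $\Delta_{s_i\cdot\omega_i,\omega_i}(V)$ supplies $q^i_-$. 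Substituting into the Fomin--Zelevinsky identity \eqref{eq:minorsgen} with $u=v=1$ gives the $qq$-system \eqref{qq}, exactly as in the derivation preceding Theorem~\ref{wrthm}; applying the Weyl lifts as in \eqref{eqnregw} and iterating \eqref{eq:minorsgen} then yields the full system \eqref{qqfull} with $q^{i,w}_+=\Delta_{w^{-1}\cdot\omega_i,\omega_i}(V)$. The clauses of Definition~\ref{nondegop} (no multiple zeros, and the disjointness of the zeros/poles of the $y_i$ from each other and from the roots of the relevant $\Lambda_k$) are precisely what force these a priori rational minors to be polynomials with $q^i_+$ relatively prime to $q^i_-$.

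For the backward map I reverse this: given a nondegenerate polynomial solution of \eqref{qq}, I put $y_i:=q^i_+$, define $g_i$ by \eqref{giyi}, and assemble the connection \eqref{genmiura}. This is manifestly a meromorphic $G$-oper with regular singularities in the sense of Definition~\ref{op}, and it is Miura with the constant $B_-$-reduction preserved; the substance is to show it is $Z^H$-twisted. I would build the trivializing transformation graded piece by graded piece: an explicit $H$-valued factor built from the $y_i$ converts the Cartan connection $\partial_z+\sum_i g_i\check\alpha_i$ into $\partial_z-Z^H$, and the $qq$-system \eqref{qq} is exactly the compatibility condition that fills in the degree-one nilpotent part in terms of $q^i_-$, after which the higher graded components are uniquely determined, in the spirit of parts iii)--iv) of Theorem~\ref{wrthm}. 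That the two maps are inverse then follows from uniqueness: since $\langle Z,\alpha_i\rangle\neq0$, the homogeneous solution $q^i_+\,e^{-\langle Z,\alpha_i\rangle z}$ of the first-order linear equation \eqref{qq} for $q^i_-$ is transcendental, so at most one polynomial $q^i_-$ is compatible with a given $q^i_+$; hence the $q^i_-$ recovered from the reconstructed oper coincides with the original.

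The hard part is exactly this polynomiality and single-valuedness. Read as an ODE in $q^i_-$, equation \eqref{qq} has an exponential homogeneous solution, so the existence of a \emph{polynomial} $q^i_-$ is a genuine constraint rather than a formality. In the forward direction it must be extracted from the oper and shown to be free of spurious poles, which is where each separate condition of Definition~\ref{nondegop} is consumed; in the backward direction the trivializing gauge transformation must be shown to lie in $G(z)$ with no residual exponential monodromy. Carrying out this polynomiality check on both sides, and verifying that the nondegeneracy conditions transport correctly between the $y_i$ and the pairs $q^i_\pm$, is the technical core of the argument; everything else is bookkeeping with generalized minors and the relation \eqref{eq:minorsgen}.
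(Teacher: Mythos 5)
The paper does not actually prove this theorem; it is imported verbatim from \cite{Brinson:2021ww}, so your attempt has to stand on its own. Its architecture --- read off $q^i_{\pm}$ as generalized minors of the trivializing gauge transformation, plug them into \eqref{eq:minorsgen} with $u=v=1$ to obtain \eqref{qq}, and invert by rebuilding the gauge transformation along the principal gradation --- is the right one and is consistent with how Section 5 of the paper operates. Two steps, however, are genuinely gapped. The more serious one is your treatment of the ``Moreover'' clause: you assume $Z^H$ is regular, but that is not a hypothesis, and Theorem~\ref{BAE}(ii) shows the statement is meant to apply when $\langle\alpha_l,Z^H\rangle=0$ for some $l$. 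In that case $\mathrm{ad}_{Z^H}$ is not invertible on $\mathfrak{n}_-$ and no constant $n\in N_-$ conjugates $Z^H+Z^{N_-}$ to $Z^H$: after reducing $Z$ to Jordan form $Z^H+n$ with $[Z^H,n]=0$, a nonzero nilpotent summand is never conjugate to zero, so no element of $G$ at all does the job. The nilpotent part must instead be removed by a $z$-dependent gauge transformation (e.g.\ $e^{zn}\in N_-[z]$ kills a commuting nilpotent $n$ while fixing $Z^H$ and preserving the $B_-$-reduction), which works with no regularity assumption. The same unjustified regularity reappears at the end, where you use $\langle Z,\alpha_i\rangle\neq 0$ to conclude that at most one polynomial $q^i_-$ is compatible with a given $q^i_+$; Theorem~\ref{BAE}(ii) records that this uniqueness genuinely fails in the non-regular case (the shift $q^{i_j}_-\mapsto q^{i_j}_-+c_jq^{i_j}_+$), so the bijection has to be routed through $y_i=q^i_+$ alone and your inverse-maps argument does not close.

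Second, you explicitly postpone what you yourself call ``the technical core'': polynomiality of the extracted minors in the forward direction and rationality (absence of exponential monodromy) of the reconstructed gauge transformation in the backward direction. This is where the theorem actually lives. In the backward direction, each graded component of the trivializing transformation is obtained by solving a first-order equation of the shape $\partial_z c-\langle\alpha,Z\rangle c=r(z)$ with $r$ rational, and such an equation has no rational solution in general (already $r=1/z$ fails); one must use the specific structure supplied by the $qq$-system, the regular-singularity hypothesis, and the conditions of Definition~\ref{nondegop} to see that a rational solution exists at every step. Declaring that ``the higher graded components are uniquely determined'' asserts the conclusion rather than proving it, so as written the proposal is an accurate plan but not a proof.
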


One can write explicit algebraic equations on zeroes of $q_+^i(z)=\prod^{{\rm deg}(q^i_+(z)}_{\ell=1}(z-w_\ell^i)$ polynomials (without the loss of generality we can assume $q^i_+(z)$ to be monic). These algebraic equations are known as {\it Bethe equations} for $Z$-twisted $^L\mathfrak{g}$-Gaudin model \cite{Feigin:2006xs}, \cite{Rybnikov:2010}:

\begin{equation}\begin{gathered}\label{bethe}
\langle \alpha_i,Z^H\rangle+\sum^{N}_{j=1}\frac{\langle \alpha_i,\check{\lambda}_j\rangle}{w_\ell^i-z_ j}-\sum_{(j,s) \neq (i,\ell)} \frac{a_{ji}}{w_\ell^i-w_s^j}=0,\\ 
i=1,\dots, r, \quad  \ell=1, \dots, \deg(q^i_+(z)).
\end{gathered}
\end{equation}

The correspondence between $qq$-systems and Bethe ansatz equations is summarized in the  following Proposition.

\begin{Thm}\cite{Brinson:2021ww}\label{BAE}
i) If $Z^H$ is regular, there is a bijection between the solutions of the
Bethe Ansatz equations \eqref{bethe} and the nondegenerate polynomial
solutions of the $qq$-system \eqref{qq}. \\
ii) If $\langle \alpha_{l},Z^H\rangle=0$, 
for $l=i_1,\dots, i_k$ and is nonzero otherwise, then
$\{q^i_+(z)\}_{i=1,\dots, r}$ and 
$\{q^i_-(z)\}_{i\neq {i_1,\dots, i_k}}$, are uniquely determined by
the Bethe Ansatz equations, but each $\{q^{i_j}_-(z)\}$ for $j=1,\dots
k$ is only determined up to an arbitrary transformation $q^{i_j}_-(z)\to q^{i_j}_-(z)+c_jq^{i_j}_+(z)$, where $c_j\in \mathbb{C}$.
\end{Thm}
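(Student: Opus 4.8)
The plan is to establish Theorem~\ref{BAE} by carefully analyzing how the $qq$-system \eqref{qq} constrains the pair $(q^i_+(z),q^i_-(z))$ once the zeros of $q^i_+(z)$ are fixed, and then matching the resulting constraints to the Bethe Ansatz equations \eqref{bethe}. First I would fix the input data: the polynomials $\Lambda_i(z)=\prod_j(z-z_j)^{\langle\alpha_i,\check\lambda_j\rangle}$ encoding the singularity structure, together with the Cartan twist $Z^H=\sum_i\zeta_i\check\alpha_i$ where $\zeta_i=\langle\omega_i,Z^H\rangle$-type data via $\langle\alpha_i,Z^H\rangle$. Writing each monic polynomial as $q^i_+(z)=\prod_{\ell=1}^{\deg q^i_+}(z-w^i_\ell)$, the strategy is to evaluate the $i$-th equation of \eqref{qq} at the root $z=w^i_\ell$ of $q^i_+$ and extract the logarithmic-derivative condition. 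At such a point the term $q^i_+\,\partial_z q^i_-$ vanishes, so the equation collapses to $-q^i_-(w^i_\ell)\,\partial_z q^i_+(w^i_\ell)=\Lambda_i(w^i_\ell)\prod_{j\neq i}[q^j_+(w^i_\ell)]^{-a_{ji}}$; nondegeneracy (relative primality of $q^i_+,q^i_-$ and the root-separation conditions of Definition~\ref{nondegop}) guarantees both sides are nonzero, and taking the logarithmic derivative $\partial_z\log(\cdot)$ of the identity $q^i_+ \partial_z q^i_- - q^i_-\partial_z q^i_+ +\langle Z,\alpha_i\rangle q^i_+q^i_-=\Lambda_i\prod_{j\neq i}[q^j_+]^{-a_{ji}}$ at $z=w^i_\ell$ produces exactly \eqref{bethe}.

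For part i), assuming $Z^H$ regular (so $\langle\alpha_i,Z^H\rangle\neq0$ for all $i$), I would show the map from nondegenerate $qq$-solutions to Bethe roots is a bijection by constructing the inverse. Given a solution $\{w^i_\ell\}$ of \eqref{bethe}, set $q^i_+(z)=\prod_\ell(z-w^i_\ell)$; the nondegeneracy hypotheses ensure these are well-defined polynomials with simple zeros satisfying the separation conditions. One must then recover $q^i_-(z)$ uniquely. Here the key observation is that once $q^i_+$ is known, the $i$-th equation of \eqref{qq} becomes a first-order inhomogeneous linear ODE for $q^i_-(z)$, namely $q^i_+\,\partial_z q^i_- - (\partial_z q^i_+ - \langle Z,\alpha_i\rangle q^i_+)\,q^i_- = \Lambda_i\prod_{j\neq i}[q^j_+]^{-a_{ji}}$. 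The homogeneous solution is $q^i_+(z)\,e^{\langle Z,\alpha_i\rangle z}$, which is not a polynomial precisely because $\langle\alpha_i,Z^H\rangle\neq0$; this transcendental obstruction forces the polynomial solution $q^i_-$ to be \emph{unique} when it exists, and existence plus the degree/divisibility bookkeeping is guaranteed by the Bethe equations holding at every root of $q^i_+$. This is the step I expect to be the main obstacle, since one must verify that the forced polynomial $q^i_-$ is genuinely a polynomial of the correct degree, relatively prime to $q^i_+$, and that no spurious poles appear from the right-hand side — all of which hinge delicately on the separation conditions in Definition~\ref{nondegop}.

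For part ii), when $\langle\alpha_l,Z^H\rangle=0$ exactly for $l\in\{i_1,\dots,i_k\}$, the above uniqueness argument must be revisited for those indices. For $i\notin\{i_1,\dots,i_k\}$ the regularity argument of part i) still applies verbatim, determining $q^i_\pm$ uniquely. For $l=i_j$, however, the homogeneous solution of the linear ODE degenerates to $q^{i_j}_+(z)\,e^{0}=q^{i_j}_+(z)$, which \emph{is} a polynomial. Hence the general polynomial solution is $q^{i_j}_-(z)\to q^{i_j}_-(z)+c_j q^{i_j}_+(z)$ for arbitrary $c_j\in\mathbb{C}$, exactly the asserted ambiguity. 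I would confirm that adding $c_j q^{i_j}_+$ preserves relative primality (since $q^{i_j}_+$ shares no root with itself only trivially, the shift does not create common factors as long as the original pair was coprime) and leaves \eqref{bethe} invariant, because the Bethe equations at the roots $w^{i_j}_\ell$ only see $q^{i_j}_-(w^{i_j}_\ell)=q^{i_j}_-(w^{i_j}_\ell)+c_j\cdot 0$ unchanged. Finally, I would note that the whole correspondence is compatible with the reduction "any $Z$-twisted Miura $G$-oper is $Z^H$-twisted" from the preceding theorem, so that the nilpotent part $Z^{N_-}$ plays no role in \eqref{bethe} and the count of free parameters matches the dimension of the relevant Springer fiber component.
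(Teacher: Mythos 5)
The paper itself offers no proof of Theorem~\ref{BAE}: it is quoted from \cite{Brinson:2021ww}. So you are being compared against the standard argument (Mukhin--Varchenko, Frenkel, and the cited paper), and your outline does follow that route: derive \eqref{bethe} from the logarithmic derivative of the $i$-th equation of \eqref{qq} at a root $w^i_\ell$ of $q^i_+$, and control $q^i_-$ by treating \eqref{qq} as a first-order linear ODE in $q^i_-$ once the $q^j_+$ are fixed. The parts you execute are correct: at $z=w^i_\ell$ the identity collapses to $-q^i_-\partial_zq^i_+=\Lambda_i\prod_{j\neq i}(q^j_+)^{-a_{ji}}$ with both sides nonzero by nondegeneracy, and equating $F'(w^i_\ell)/F(w^i_\ell)$ for the two sides makes $q^i_-$ cancel and gives $\tfrac{\partial_z^2q^i_+}{\partial_zq^i_+}(w^i_\ell)-\langle Z,\alpha_i\rangle=\partial_z\log\Lambda_i(w^i_\ell)-\sum_{j\neq i}a_{ji}\,\partial_z\log q^j_+(w^i_\ell)$, which is \eqref{bethe} because $a_{ii}=2$ and $q^i_+$ has simple roots. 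Your uniqueness/ambiguity mechanism via the homogeneous solution is also the right one for both i) and ii), up to a sign: the homogeneous solution is $q^i_+(z)e^{-\langle Z,\alpha_i\rangle z}$, not $e^{+\langle Z,\alpha_i\rangle z}$ (harmless for the dichotomy polynomial vs.\ not).

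The genuine gap is the existence half of the bijection in part i), which you explicitly flag but do not close: given a Bethe solution you set $q^i_+=\prod_\ell(z-w^i_\ell)$, but you never produce a polynomial $q^i_-$, so the inverse map is not constructed and the ``bijection'' is only half proved. The standard way to close it: put $u=q^i_-/q^i_+$ and $c=\langle Z,\alpha_i\rangle$, so \eqref{qq} reads $u'+cu=R/(q^i_+)^2$ with $R=\Lambda_i\prod_{j\neq i}(q^j_+)^{-a_{ji}}$ a polynomial (note $-a_{ji}\geq 0$ for $j\neq i$). Expand the right side in partial fractions as $P(z)+\sum_\ell\bigl[A_\ell(z-w^i_\ell)^{-2}+B_\ell(z-w^i_\ell)^{-1}\bigr]$ and try $u=T(z)+\sum_\ell\gamma_\ell(z-w^i_\ell)^{-1}$ with $T$ polynomial; matching double poles forces $\gamma_\ell=-A_\ell$, matching simple poles forces $c\gamma_\ell=B_\ell$, and this compatibility condition is \emph{exactly} the Bethe equation at $w^i_\ell$, while $T'+cT=P$ always has a polynomial solution (unique if $c\neq0$, unique up to an additive constant if $c=0$, which is the source of the $c_j$ in part ii)). This simultaneously yields existence, shows the Bethe equations are precisely the obstruction, and gives coprimality for free since $q^i_-(w^i_\ell)=\gamma_\ell\,\partial_zq^i_+(w^i_\ell)=-R(w^i_\ell)/\partial_zq^i_+(w^i_\ell)\neq0$ by nondegeneracy. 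Without some version of this residue-matching step, your part i) establishes injectivity but not surjectivity of the map from nondegenerate $qq$-solutions to Bethe solutions.
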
  

From now on we will drop the superscript $H$ over $Z$ and consider $Z$ to be an element of Cartan subalgebra.  We also remark, that in the case of Gaudin model, one puts the restrictions on degrees of $\{q^i_+(z)\}_{i=1,\dots, r}$, so that they determine a certain weight: \begin{eqnarray}
\check{\Lambda}=\sum^N_{i=1}\check{\lambda}_i-\sum_i{\rm deg}(q^i_{+}(z))\check{\alpha}_i
\end{eqnarray}  
in the representation of $^L\mathfrak{g}$, and the degrees of $\{q^{i,w}_{+}\}_{w\in W,~i=1,\dots, r} $ in the full $qq$-system determine 
$w\cdot\check{\Lambda}=\sum^N_{i=1}\check{\lambda}_i-\sum_i{\rm deg}(q^{w,i}_{+}(z))\check{\alpha}_i$. This means that with this restriction on the degrees, the $qq$-system  can be extended to the 
full $qq$-system, allowing polynomial solutions. From now on we will assume that $qq$-systems allow such extension to the full $qq$-system.

The Miura $G$-oper connection operator, which correspond to the $qq$-system can be explicitly written as follows:
\begin{eqnarray}\label{stform}
\nabla=\partial_z-Z+\sum^r_{i=1}\partial_z\log(q^i_+(z))\check{\alpha}_i+\sum^r_{i=1}\Lambda_i(z)f_i~.
\end{eqnarray}

Let us explain what role the full $qq$-system will play in this context. To do that, we use the following Proposition to introduced {\it B\"acklund transformations}, aligned with the action of the generators of the Weyl group \footnote{ We note here that the degenerate version of the $qq$-system ($Z$=0), as well as degenerate version of the Proposition below were introduced by Mukhin and Varchenko \cite{mukhvarmiura}.}.

\begin{Prop} \cite{Brinson:2021ww}   \label{fiter}  Let $\{q^j_+,q^j_-\}_{j=1,\dots,r}$ 
be a polynomial solution of the $qq$-system \eqref{qq}, and let $\nabla$ be the connection  in the form \eqref{stform}. Let $\nabla^{(i)}$ be the connection obtained
  from $\nabla$ via the gauge transformation by $e^{\mu_i(z)e_i}$,
  where
  \begin{equation}\label{eq:PropDef}
    \mu_i(z)=\Lambda_i(z)^{-1}\Bigg[\partial_z\log\Bigg(\frac{q^i_-(z)}{q^i_+(z)}\Bigg)+\langle
    \alpha_i,Z^H\rangle\Bigg]
  \end{equation}
Then $\nabla^{(i)}$ is obtained by making the following substitutions
in \eqref{stform}:

\begin{equation}\begin{aligned}
q^j_+(z) &\mapsto q^j_+(z), \qquad j \neq i, \\
q^i_+(z) &\mapsto q^i_-(z), \qquad Z\mapsto s_i(Z^H)=Z^H-\langle \alpha_i, Z^H\rangle\ \check{\alpha}_i~.
\label{eq:Aconnswapped}  
\end{aligned}
\end{equation}
\end{Prop}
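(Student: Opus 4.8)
The plan is to prove the statement by directly computing the gauge-transformed connection, which is feasible because $e^{\mu_i(z)e_i}$ is an explicit one-parameter element of $N_+(z)$ (here $e_i$ is nilpotent, so the exponential is polynomial in the rational function $\mu_i$). Writing $\nabla=\partial_z+A(z)$ with $A=-Z+\sum_j\partial_z\log(q^j_+)\check{\alpha}_j+\sum_j\Lambda_j f_j$, the gauge action yields
\[
\nabla^{(i)}=e^{\mu_i e_i}\,\nabla\, e^{-\mu_i e_i}=\partial_z+\mathrm{Ad}(e^{\mu_i e_i})A-(\partial_z\mu_i)e_i,
\]
where the last term comes from $(\partial_z e^{\mu_i e_i})e^{-\mu_i e_i}=(\partial_z\mu_i)e_i$. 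So the whole problem reduces to expanding $\mathrm{Ad}(e^{\mu_i e_i})=e^{\mu_i\,\mathrm{ad}(e_i)}$ on each summand of $A$ and collecting terms by root type.

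First I would compute the adjoint action piece by piece using the Chevalley relations. For the Cartan part $C=-Z+\sum_j\partial_z\log(q^j_+)\check{\alpha}_j\in\mathfrak{h}$ one has $[e_i,C]=-\langle\alpha_i,C\rangle e_i$ and $\mathrm{ad}(e_i)^2 C=0$, so $\mathrm{Ad}(e^{\mu_i e_i})C=C-\mu_i\langle\alpha_i,C\rangle e_i$. For the negative part, $[e_i,\sum_j\Lambda_j f_j]=\Lambda_i\check{\alpha}_i$ and $\mathrm{ad}(e_i)^2(\sum_j\Lambda_j f_j)=-2\Lambda_i e_i$, while $\mathrm{ad}(e_i)^3$ vanishes (the triple $\{e_i,f_i,\check{\alpha}_i\}$ spans an $\mathfrak{sl}_2$), so the series terminates:
\[
\mathrm{Ad}(e^{\mu_i e_i})\Big(\textstyle\sum_j\Lambda_j f_j\Big)=\sum_j\Lambda_j f_j+\mu_i\Lambda_i\check{\alpha}_i-\mu_i^2\Lambda_i e_i.
\]
Thus every new term lives in $\mathrm{span}\{\check{\alpha}_i,e_i\}$, and $\nabla^{(i)}$ splits cleanly into a Cartan part, the unchanged part $\sum_j\Lambda_j f_j$, and a single $e_i$-term.

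Next I would match the Cartan part. Using $\mu_i\Lambda_i=\partial_z\log(q^i_-/q^i_+)+\langle\alpha_i,Z\rangle$ straight from the definition of $\mu_i$, the coefficient of $\check{\alpha}_i$ becomes $\partial_z\log q^i_++\mu_i\Lambda_i=\partial_z\log q^i_-+\langle\alpha_i,Z\rangle$, while the coefficients of $\check{\alpha}_j$ for $j\ne i$ are untouched; combined with $-Z+\langle\alpha_i,Z\rangle\check{\alpha}_i=-s_i(Z)$ this reproduces exactly \eqref{stform} after the substitutions $q^i_+\mapsto q^i_-$, $q^j_+\mapsto q^j_+$ ($j\ne i$), $Z\mapsto s_i(Z)$. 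Since the $f$-part is literally unchanged, everything will match provided the $e_i$-term is absent.

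The heart of the argument, and the step I expect to be the real obstacle, is showing that the $e_i$-coefficient
\[
-\mu_i\langle\alpha_i,C\rangle-\mu_i^2\Lambda_i-\partial_z\mu_i=-\mu_i\big(\langle\alpha_i,C\rangle+\mu_i\Lambda_i+\partial_z\log\mu_i\big)
\]
vanishes. Here I would use that the $qq$-system \eqref{qq}, divided by $q^i_+q^i_-$, is equivalent to the explicit factorization $\mu_i=(q^i_+q^i_-)^{-1}\prod_{j\ne i}[q^j_+]^{-a_{ji}}$, whence $\partial_z\log\mu_i=-\partial_z\log q^i_+-\partial_z\log q^i_--\sum_{j\ne i}a_{ji}\partial_z\log q^j_+$. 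Substituting this together with $\langle\alpha_i,C\rangle=-\langle\alpha_i,Z\rangle+\sum_j a_{ji}\partial_z\log q^j_+$ (recall $a_{ii}=2$) and $\mu_i\Lambda_i=\partial_z\log(q^i_-/q^i_+)+\langle\alpha_i,Z\rangle$, all terms cancel in pairs and the bracket is identically zero. Conceptually, the vanishing of the $e_i$-component is a Riccati-type equation for $\mu_i$ that is \emph{equivalent} to the $qq$-system, so it is precisely the hypothesis that keeps $\nabla^{(i)}$ a Miura oper of the form \eqref{stform}. This is the only place where \eqref{qq} enters, and pinning down the correct logarithmic-derivative identity is the main bookkeeping hurdle.
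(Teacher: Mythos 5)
Your computation is correct and complete: the gauge formula $\nabla^{(i)}=\partial_z+\mathrm{Ad}(e^{\mu_i e_i})A-(\partial_z\mu_i)e_i$, the terminating adjoint expansions, the identification of the Cartan part with the substituted form of \eqref{stform}, and the cancellation of the $e_i$-coefficient via the identity $\mu_i=(q^i_+q^i_-)^{-1}\prod_{j\neq i}[q^j_+]^{-a_{ji}}$ (which is exactly \eqref{qq} divided by $q^i_+q^i_-$) all check out. The paper itself gives no proof of this Proposition --- it is imported from the cited reference --- but your direct Riccati-type verification is the standard argument for this statement, so there is nothing to contrast.
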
   
Thus the sequence of B\"acklund transformations produces $Z^w$-twisted Miura $G$-oper, where
\begin{equation}
Z^w=\tilde{w}Z\tilde{w}^{-1},\quad w\in W, 
\end{equation}
 corresponding to a given $G$-oper, each of which is constructed from $q^{i,w}_{\pm}(z)$.

\section{From $Z$-twisted Miura $G$-opers to $G$-Wronskians}

We see that there is a crtain relation between nondegenerate $G$-Wronskians and 
nondegenerate $G$-opers. Let us make this correspondence explicit. The $Z$-twisted condition implies that that there exist an element $\mathscr{B}_-(z)$, such that 
\begin{eqnarray}\label{mopdiag}
\nabla =\mathcal{B}^{-1}_-(z)(\partial_z-Z)\mathcal{B}_-(z), 
\end{eqnarray}
where 
\begin{equation}    \label{vdots}
\mathcal{B}_-(z) = \prod_{i=1}^r
e^{\frac{q^i_{-}(z)}{q^i_{+}(z)} f_i}\prod_{i=1}^r \Big[q_+^i(z)\Big]^{\check\alpha_i}\dots,
\end{equation}
where dots stand for the terms from $[N_-,N_-](z)$.
Let us use the following upper-triangular transformation, choosing a certain order in the product below: 
\begin{equation}\label{upper}
\mathcal{N}_+(z)=\prod^r_{i=1}e^{\frac{(\partial_z\log(q_+^i(z))-\zeta_i)e_i}{\Lambda_i(z)}}
\end{equation} 
such that 
\begin{equation}
\mathcal{N}^{-1}_+(z)\nabla\mathcal{N}_+(z)=\partial_z+p^{\Lambda}_{-1}+n_+(z)
\end{equation}
where $n_+(z)\in \mathfrak{n}_+(z)$. 
Then applying  this operator to the highest weight vector $\nu$ in some representation $V$ the product $\mathcal{G}(z)=\mathcal{B}_-(z)\mathcal{N_+}(z)$ satisfies the equation
\begin{eqnarray}
\mathcal{G}(z)^{-1}(\partial_z-Z)\mathcal{G}(z)\nu=p^{\Lambda}_{-1}\nu,
\end{eqnarray}
which coincides with equations $\ref{eqnreg}$ when $\nu$ varies over highest weights corresponding to fundamental representations. Turns out this correspondence works in a different direction as well, namely we want to show the following.


\begin{Thm}
There is a one-to-one correspondence between nondegnerate $Z$-twisted Miura G-opers, and equivalence classes of $G$-Wronskians, corresponding to the solution of the nondegenerate full $qq$-system. The element $\mathcal{G}(z)=\mathcal{B}_-(z)\mathcal{N}_+(z)\in G(z)$, where $\mathscr{B}_-(z)$, $\mathscr{N}_+(z)$ are defined in \eqref{vdots}, \eqref{upper} correspondingly, is a representative in the class of $G$-Wronskians corresponding to a given solution of the full $qq$-system.
\end{Thm}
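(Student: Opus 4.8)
The plan is to factor the desired bijection through nondegenerate solutions of the full $qq$-system. By the theorem of \cite{Brinson:2021ww} quoted in Section~4 (together with the B\"acklund transformations of Proposition~\ref{fiter}, which extend a base solution to the full system), nondegenerate $Z$-twisted Miura $G$-opers are already in one-to-one correspondence with nondegenerate solutions of the full $qq$-system via $y_i(z)=q^i_+(z)$. Hence it suffices to produce a bijection between equivalence classes of nondegenerate $G$-Wronskians and nondegenerate full $qq$-solutions. One direction is Theorem~\ref{wrthm}: to a class with representative $\mathscr{G}(z)$ it assigns the solution $q^{i,w}_+(z)=\Delta_{w^{-1}\cdot\omega_i,\omega_i}(\mathscr{G}(z))$. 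The substance of the present theorem is the inverse map, built from the explicit element $\mathcal{G}(z)=\mathcal{B}_-(z)\mathcal{N}_+(z)$.

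First I would verify that the construction in \eqref{vdots}--\eqref{upper} produces a $G$-Wronskian. Starting from the nondegenerate Miura oper in the form \eqref{stform}, the $Z$-twisted condition yields $\nabla=\mathcal{B}_-^{-1}(z)(\partial_z-Z)\mathcal{B}_-(z)$ with $\mathcal{B}_-(z)$ as in \eqref{vdots}, and the upper-triangular gauge \eqref{upper} removes the Cartan and unwanted nilpotent terms; the computation preceding the theorem then shows that $\mathcal{G}(z)=\mathcal{B}_-(z)\mathcal{N}_+(z)$ satisfies $\mathcal{G}(z)^{-1}(\partial_z-Z)\mathcal{G}(z)\nu_i=p^{\Lambda}_{-1}(z)\nu_i$, i.e.\ the defining equation \eqref{eqnreg} of a $G$-Wronskian with regular singularities. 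Nondegeneracy of the minors $\Delta_{w\cdot\omega_i,\omega_i}(\mathcal{G}(z))$ then follows from the hypotheses on the $qq$-solution (coprimality of $q^i_+$ and $q^i_-$, together with the genericity conditions of Definition~\ref{nondegop}), guaranteeing these are nonzero polynomials.

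The core of the argument is that the two maps are mutually inverse, and the key observation is that the principal minor $\Delta^{\omega_i}$ is invariant under right multiplication by $N_+$: if $g=n_-hn_+$, then $gn_+'$ has the same torus factor $h$ for any $n_+'\in N_+$, whence $\Delta^{\omega_i}(gn_+')=[h]^{\omega_i}=\Delta^{\omega_i}(g)$. Since every minor $\Delta_{w^{-1}\cdot\omega_i,\omega_i}$ and $\Delta_{w_i\cdot\omega_i,\omega_i}$ is obtained from $\Delta^{\omega_i}$ by a fixed left translation of its argument, it is likewise right-$N_+(z)$-invariant, so the minors of $\mathcal{G}(z)=\mathcal{B}_-(z)\mathcal{N}_+(z)$ equal those of $\mathcal{B}_-(z)$. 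As $\mathcal{B}_-(z)$ is assembled from the given solution in \eqref{vdots} so that $\Delta_{\omega_i,\omega_i}(\mathcal{B}_-(z))=q^i_+(z)$ and, more generally, $\Delta_{w^{-1}\cdot\omega_i,\omega_i}(\mathcal{B}_-(z))=q^{i,w}_+(z)$, the minors of $\mathcal{G}(z)$ reproduce the original full solution; combined with the Section~4 bijection this makes Miura oper $\to$ $\mathcal{G}(z)$ $\to$ $qq$-solution $\to$ Miura oper the identity. Conversely, given a class with representative $\mathscr{G}(z)$, Theorem~\ref{wrthm}(ii)--(iii) shows its solution determines $\mathscr{B}_-(z)$ and the degree-one part $n_+(z)|_{\mathfrak{b}_{+,1}(z)}$ completely; the constructed $\mathcal{G}(z)$ shares these data, so by Theorem~\ref{wrthm}(iv) it differs from $\mathscr{G}(z)$ by a factor in $[N_+,N_+](z)$ and represents the same equivalence class.

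The step I expect to be the main obstacle is reconciling the two sources of the higher minors. On the Wronskian side $q^{i,w}_+(z)=\Delta_{w^{-1}\cdot\omega_i,\omega_i}(\mathcal{G}(z))$ are read off from $\mathcal{B}_-(z)$, whereas on the oper side the same polynomials arise from the sequence of B\"acklund transformations of Proposition~\ref{fiter} producing the $Z^w$-twisted opers. One must check that the $[N_-,N_-](z)$ terms implicit in the dots of \eqref{vdots}, which are fixed by the diagonalization $\nabla=\mathcal{B}_-^{-1}(z)(\partial_z-Z)\mathcal{B}_-(z)$, yield exactly these B\"acklund-generated $q^{i,w}_+(z)$, and that each $\Delta_{w\cdot\omega_i,\omega_i}(\mathcal{G}(z))$ is a genuine nonzero polynomial rather than a rational function with spurious poles. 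Both points reduce to propagating the base relations through the Weyl group by means of \eqref{eq:minorsgen}, but verifying this coherently for all $w\in W$ is delicate, since it is precisely the content of the assumption, made earlier, that the nondegenerate $qq$-system admits an extension to the full $qq$-system.
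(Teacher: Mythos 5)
Your proposal is correct and rests on the same construction as the paper: the Gaussian decomposition $\mathcal{G}(z)=\mathcal{B}_-(z)\mathcal{N}_+(z)$ with $\mathcal{B}_-$, $\mathcal{N}_+$ given by \eqref{vdots} and \eqref{upper}, and the computation preceding the theorem for the direction oper $\to$ Wronskian. Where you diverge is in the backward direction: the paper argues directly at the level of connections, observing that $\partial_z+A(z)=\mathscr{B}_-(z)^{-1}(\partial_z-Z)\mathscr{B}_-(z)$ lies in $\partial_z+\mathfrak{b}_-(z)$ and that the Wronskian equation $\mathscr{N}_+^{-1}A\,\mathscr{N}_+\nu_i=p^{\Lambda}_{-1}\nu_i$ then forces $A(z)\in p^{\Lambda}_{-1}+\mathfrak{h}(z)$, i.e.\ a Miura oper whose Cartan part is read off from the principal minors $q^i_+$; whereas you recover the full $qq$-solution from the generalized minors (using their invariance under right multiplication by $N_+(z)$, which is correct since $\Delta_{w^{-1}\cdot\omega_i,\omega_i}(g)=\Delta^{\omega_i}(\tilde w g)$ only translates the argument on the left) and then invoke the oper--$qq$ bijection of \cite{Brinson:2021ww}. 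Your route makes the ``mutually inverse'' claim and the independence of the choice of representative in the equivalence class explicit, which the paper's one-directional and rather terse proof leaves implicit. The obstacle you flag at the end --- that the $[N_-,N_-](z)$ terms hidden in the dots of \eqref{vdots} must produce exactly the B\"acklund-generated $q^{i,w}_+(z)$ as the higher minors, coherently over all of $W$ --- is genuine and is not addressed in the paper's proof either; it is exactly the role of the standing assumption that the nondegenerate $qq$-system extends to a polynomial full $qq$-system, so your proof is no less complete than the paper's on this point.
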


\begin{proof}
Given $\mathscr{G}(z)=\mathscr{B}_-(z)\mathscr{N}_+(z)\in G(z)$ corresponding to a $G$-Wronskian, we have the following equations:
\begin{eqnarray}
\mathscr{N}_+(z)^{-1}\mathscr{B}_-(z)^{-1}(\partial_z-Z)\mathscr{B}_-(z)\mathscr{N}_+(z)\nu_i=p^{\Lambda}_{-1}\nu_i, \quad, i=1,\dots , r.
\end{eqnarray}
This means $\mathscr{N}_+(z)^{-1}A(z)\mathscr{N}_+(z)\nu_i=p^{\Lambda}_{-1}\nu_i$, where 
\begin{eqnarray}
\partial_z+A(z)=\mathscr{B}_-(z)^{-1}(\partial_z-Z)\mathscr{B}_-(z),
\end{eqnarray}
so that $A(z)\in \mathfrak{b}_-(z)$. Then  $A(z)\in p^{\Lambda}_{-1}+\mathfrak{h}_+(z)$, namely a Miura oper connection. That, however, implies that $A(z)$ defines a Miura oper connection. The diagonal part of $\mathscr{B}_-(z)$ is given by principal minors, which are exactly the $q^i_{+}(z)$, thus reproducing the diagonal part of $Z$-twisted Miura-Plucker oper.

\end{proof}

At the same time, the nondegeneracy conditions on $qq$-systems is an open condition, which was only needed for Miura $G$-oper connection itself and the corresponding $Z$-twisted condition, which is not the case for $G$-Wronskians, which only care about the operator (\ref{stform}), which satisfies $Z$-twisted condition as long as $\{q_+^i(z)\}_{i=1,\dots,r}$ satisfy the $qq$-system.  Therefore we have the following theorem.
\begin{Thm}
There is a one-to-one correspondence between solutions of the full $qq$-system, such that $q^{i,w}_+(z)\neq 0$ and equivalence classes of nondegenerate $G$-Wronskians.  
\end{Thm}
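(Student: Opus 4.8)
The plan is to prove the bijection by exhibiting mutually inverse maps between solutions of the full $qq$-system with all $q^{i,w}_+(z)\neq 0$ and equivalence classes of nondegenerate $G$-Wronskians, and to do so by removing the auxiliary oper-nondegeneracy hypotheses from the previous theorem. First I would recall the forward map: given a solution of the full $qq$-system, I form $\mathscr{B}_-(z)$ and $\mathscr{N}_+(z)$ via the formulas \eqref{vdots} and \eqref{upper} and set $\mathscr{G}(z)=\mathscr{B}_-(z)\mathscr{N}_+(z)$. The computation preceding the previous theorem already shows that this $\mathscr{G}(z)$ satisfies \eqref{eqnreg}, hence is a $G$-Wronskian, and by Theorem~\ref{wrthm} its generalized minors $\Delta_{w^{-1}\cdot\omega_i,\omega_i}(\mathscr{G}(z))$ recover $q^{i,w}_+(z)$. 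The key point I must verify is that the hypothesis $q^{i,w}_+(z)\neq 0$ is exactly what is needed for this $\mathscr{G}(z)$ to be \emph{nondegenerate} in the sense of Definition after \eqref{eqnreg}, since nondegeneracy of the Wronskian was defined precisely as the nonvanishing of all these minors.

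Next I would construct the inverse map. Starting from a nondegenerate $G$-Wronskian $\mathscr{G}(z)$, part i) of Theorem~\ref{wrthm} gives a Gaussian decomposition $\mathscr{G}(z)=\mathscr{B}_-(z)\mathscr{N}_+(z)$, and part ii) identifies the diagonal minors of $\mathscr{B}_-(z)$ with $q^i_+(z)$, which by construction satisfy the $qq$-system \eqref{qq} and extend to the full $qq$-system \eqref{qqfull} with all $q^{i,w}_+(z)\neq 0$. To land in the world of opers I would run the conjugation $\partial_z+A(z)=\mathscr{B}_-(z)^{-1}(\partial_z-Z)\mathscr{B}_-(z)$ exactly as in the proof of the previous theorem, producing a Miura oper connection in the standard form \eqref{stform}. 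The two maps are inverse to each other up to the equivalence $\mathscr{G}(z)\sim\mathscr{G}(z)U_+(z)$, $U_+(z)\in[N_+,N_+](z)$, because by part iv) of Theorem~\ref{wrthm} such right multiplication does not change the restriction to any $W_i$, hence neither the defining equations nor the minors $q^{i,w}_+(z)$; this is what makes the correspondence well-defined on equivalence classes.

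The crux, and the main obstacle, is the sentence in the excerpt that the oper-side nondegeneracy conditions (Definition~\ref{nondegop}: no multiple zeros, coprimality with $\Lambda_k$, disjointness of zeros and poles across $i\neq j$) are \emph{open} conditions needed only to make the Miura $G$-oper connection and its $Z$-twisted gauge equivalence well-behaved, whereas the $G$-Wronskian cares only about the operator \eqref{stform} satisfying the $Z$-twisted condition, which holds as soon as $\{q^i_+(z)\}$ solve \eqref{qq}. So the real work is to show that the passage to $G$-Wronskians genuinely relaxes these hypotheses: I must check that \eqref{stform} still defines a valid $Z$-twisted connection, and that $\mathscr{B}_-(z)$ and $\mathscr{N}_+(z)$ are still well-defined in $G(z)$, using only $q^{i,w}_+(z)\neq 0$ rather than the stronger Definition~\ref{nondegop}. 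Concretely this amounts to verifying that $\mathscr{N}_+(z)$ from \eqref{upper} makes sense whenever $\Lambda_i(z)\neq 0$ generically and that the product $\mathscr{B}_-(z)\mathscr{N}_+(z)$ lies in $G(z)$, independent of the coprimality and disjointness requirements.

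I would close by assembling these pieces: the forward map sends full $qq$-solutions with $q^{i,w}_+\neq 0$ to equivalence classes of nondegenerate $G$-Wronskians; Theorem~\ref{wrthm} supplies the inverse on the level of minors; part iv) guarantees the maps descend to equivalence classes and are mutually inverse. The only substantive verification beyond citing earlier results is the weakening of the nondegeneracy hypotheses, which I expect to follow from a direct inspection of the formulas \eqref{vdots} and \eqref{upper} showing that nonvanishing of the $q^{i,w}_+(z)$ alone suffices to define $\mathscr{G}(z)\in G(z)$ and to guarantee its Gaussian decomposition via Proposition~\ref{gausscond}.
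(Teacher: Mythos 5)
Your proposal is correct and follows essentially the same route as the paper, which derives this theorem directly from the preceding one together with the observation that the oper-side nondegeneracy conditions of Definition~\ref{nondegop} are needed only for the Miura oper and its $Z$-twisted gauge equivalence, while the Wronskian construction via \eqref{vdots} and \eqref{upper} and the Gaussian-decomposition criterion of Proposition~\ref{gausscond} require only that the $q^{i,w}_+(z)$ be nonzero. Your forward and inverse maps, and the use of Theorem~\ref{wrthm} parts i)--iv) to descend to equivalence classes, are exactly the intended mechanism.
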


\subsection*{Example: $SL(2)$-opers and $SL(2)$-Wronskians} 
The $Z$-twisted Miura $SL(2)$-oper connection is given by the following matrix operator in defining representation of $\mathfrak{sl(2)}$:
\begin{equation}
\partial_z+
\begin{pmatrix}
\partial_z\log(q_+)-\zeta &\Lambda(z)\\
0&\zeta-\partial_z\log(q_+)
\end{pmatrix}
\end{equation}
where $\Lambda(z)$ is a polynomial, defining regular singularities $q_+(z)$ is a polynomial, satisfying $qq$-system:
\begin{equation}\label{sl2qq}
q_+(z)\partial_zq_-(z)-q_-(z)\partial_zq_+(z)+2\zeta q_+(z)q_-(z)=\Lambda(z)
\end{equation}
The matrices corresponding to $\mathcal{B}_+(z)$ and $\mathcal{N}_+$ are represented by the following matrices:
\begin{equation}
\mathcal{B}_-(z)=
\begin{pmatrix}
q_+(z)&0\\
q_-(z)&q_+(z)^{-1}
\end{pmatrix}
\quad 
\mathcal{N}_+(z)=
\begin{pmatrix}
1&\Lambda(z)^{-1}(\partial_z\log[q_+(z)]-\zeta)\\
0&1
\end{pmatrix},
\end{equation}
so that their product give a version of a Wronskian (one needs to use the qq-system (\ref{sl2qq}) to obtain the answer):
\begin{equation}
\mathcal{G}(z)=\mathcal{B}_-(z)\mathcal{N}_+(z)=
\begin{pmatrix}
q_+(z)&\Lambda^{-1}(\partial_z-\zeta)q_+(z)\\
q_-(z)&\Lambda^{-1}(\partial_z+\zeta)q_-(z)
\end{pmatrix}.
\end{equation}

\subsection{Comparison to $(G,q)$-oper case} In \cite{KZminors} we considered a similar construction related to $(G,q)$-opers. The $(G,q)$-Wronskian equations, which is the $q$-difference version of (\ref{eqnreg}) is as follows:
\begin{eqnarray}\label{qwr}
Z^{-1}\mathscr{G}(qz)\nu_i=\mathscr{G}(z)s_{\Lambda}^{-1}(z)\nu_i, \quad i=1,\dots r,
\end{eqnarray}
where $\mathscr{G}(z)\in G(z)$, $Z\in H$, $s_{\Lambda}(z)=\prod^r_{i=1}\Lambda_i^{\check{\alpha}_i}(z)s_i$ is a lift of a chosen Coxeter element to $G(z)$ and $\Lambda_i(z)$ are polynomials, corresponding to regular singularities.
As in the differential case, one can define a class of those: namely, if $\mathscr{G}(z)$ is a solution of (\ref{qwr}), then so is $\mathscr{G}(z)n_+(z)$, as long as  $s_{\Lambda}n_+(z)s^{-1}_{\Lambda}\in N_+(z)$. 
Choosing Coxeter element in such a way that $s_{\Lambda}^{h/2}=w^{\Lambda}_0$, where $w^{\Lambda}_0$ is a lift of the Weyl reflection, corresponding to the longest root to  to $G(z)$, we can iterate the equations $\ref{qwr}$:
\begin{eqnarray}\label{qwrit}
&&Z^{-k}\mathscr{G}(q^kz)\nu_i=\mathscr{G}(z)s_{\Lambda}^{-1}(z)s_{\Lambda}^{-1}(qz)\dots s_{\Lambda}^{-1}(q^{k-1}z)\nu_i,\nonumber \\
&& i=1,\dots r, \quad k=1, \dots h/2.
\end{eqnarray}
The solution of this kind picks up a unique element in the equivalence class of $(G,q)$-Wronskians, moreover, we were able to write down a universal formula for it. In the $SL(N)$ case that gives a suitably twisted $q$-Wronskian matrix. Unfortunately it is non-obvious whether if is possible to write a similar universal formula in differential case: the blunt approach using differential analogue of equations (\ref{qwrit}) do not seem to work beyond defining representation of $SL(N)$.

\bibliography{cpn1}

\end{document}